\newcommand{\csp}{\mathcal{P}(\omega)}
\newcommand{\F}{\mathcal{F}}
\newcommand{\csf}{{}\sp{\omega}2}
\newcommand{\U}{\mathcal{U}}
\newcommand{\B}{\mathcal{B}}
\newcommand{\A}{\mathcal{A}}
\newcommand{\V}{\mathcal{V}}
\newcommand{\Open}{\mathcal{O}}
\newcommand{\Sone}[1]{\mathrm{S}_1(#1)}
\newcommand{\Sfin}[1]{\mathrm{S}_{\mathrm{fin}}(#1)}
\newcommand{\Ufin}[1]{\mathrm{U}_{\mathrm{fin}}(#1)}
\newcommand{\up}[1]{{#1}\!\!\uparrow}
\newcommand{\pair}[1]{\langle #1 \rangle}
\renewcommand{\d}{\mathfrak{d}}
\renewcommand{\b}{\mathfrak{b}}
\newcommand{\baire}{{}\sp\omega{\omega}}
\newcommand{\cont}{\mathfrak{c}}
\newcommand{\I}{\mathcal{I}}
\newcommand{\X}{\mathcal{X}}
\newcommand{\Y}{\mathcal{Y}}
\newcommand{\W}{\mathcal{W}}
\renewcommand{\S}{\mathcal{S}}
\newtheoremstyle{theorem}
     {11pt}
     {11pt}
     {}
     {}
     {\bfseries}
     {}
     {.5em}
     {\noindent\thmnumber{#2}. \thmname{#1}\thmnote{#3}}
\theoremstyle{theorem}
\newtheorem{lemma}{Lemma}[section]
\newtheorem{propo}[lemma]{Proposition}
\newtheorem{coro}[lemma]{Corollary}
\newtheorem{thm}[lemma]{Theorem}
\newtheorem{ques}[lemma]{Question}
\title{Some observations on filters with properties defined by open covers}
\author[R. Hern\'andez-Guti\'errez]{Rodrigo Hern\'andez-Guti\'errez}
\email[R. Hern\'andez-Guti\'errez]{rodhdz@yorku.ca}
\author[P. J. Szeptycki]{Paul J. Szeptycki}
\email[P. J. Szeptycki]{szeptyck@yorku.ca}
\address{Department of Mathematics and Statistics, York University, Toronto, ON M3J 1P3, Canada}
\date{\today}
\subjclass[2010]{54D20, 54D80}
\keywords{Filters, Menger property, Hurewicz property}
\begin{document}

\begin{abstract}
We study the relation between the Hurewicz and Menger properties of filters considered topologically as subspaces of $\mathcal{P}(\omega)$ with the Cantor set topology.
\end{abstract}

\maketitle

\section{Introduction}

A filter $\F$ on a non-empty set $X$ is a subset $\F\subset\mathcal{P}(X)$ such that: (a) $\emptyset\notin\F$, (b) if $x,y\in\F$ then $x\cap y\in\F$, and (c) if $x\in\F$ and $x\subset y\subset X$, then $y\in\F$. Since the power set $\csp$ can be identified with the Cantor set $\csf$ via characteristic functions, a filter on a countable set can be thought of as a subspace of the Cantor set. The topology of filters on $\omega$ has recently attracted much interest (see \cite{marciszewski}, \cite{hg-hr-CDH} and \cite{kunen-medini-zdomsky-Pfilters}). Also, in Chapter 4 of \cite{bart} it is possible to find an extensive study of measure and category theoretic properties of filters.

In this note we would like to study covering properties of filters defined via the selection principles introduced in \cite{COC2}. Henceforth, we restrict our discussion to Lindel\"of topological spaces. For such a topological space $X$, let $\Open$ be the collection of open covers of $X$. A cover $\U\in\Open$ is a \emph{$\omega$-cover} if $X\notin\U$ and for every $S\in[X]\sp{<\omega}$ there is $U\in\U$ with $S\subset\U$. A cover $\U\in\Open$ is a \emph{$\gamma$-cover} if it is infinite and for every $x\in X$ the set $\{U\in\U:x\not\in U\}$ is finite. The collection of $\omega$-covers and $\gamma$-covers are denoted by $\Omega$ and $\Gamma$, respectively. Clearly, $\Gamma\subset\Omega\subset\Open$. Given families $\A$ and $\B$ of open covers and a topological space $X$, consider the following definitions from \cite{COC2}:
\begin{itemize} 
\item $X$ is $\Sone{\A,\B}$ if given $\{\U_n:n<\omega\}\subset\A$, for each $n<\omega$ there is $U_n\in\U_n$ such that $\{U_n:n<\omega\}\in\B$, 
\item $X$ is $\Sfin{\A,\B}$ if given $\{\U_n:n<\omega\}\subset\A$, for each $n<\omega$ there is $\V_n\in[\U_n]\sp{<\omega}$ such that $\bigcup\{\V_n:n<\omega\}\in\B$,
\item $X$ is $\Ufin{\A,\B}$ if given $\{\U_n:n<\omega\}\subset\A$, for each $n<\omega$ there is $\V_n\in[\U_n]\sp{<\omega}$ such that $\{\bigcup\V_n:n<\omega\}\in\B$, 
\end{itemize}

This provides $3\times3\times3=27$ possibly different classes of topological spaces. Some of these properties are null and some reduce to others, see figures 1, 2 and 3 of \cite{COC2} or figure 1 of \cite{tsaban-open_problems_2}. The final diagram obtained is the following, which is called the Scheepers diagram.

$$
\xymatrix{
 & & \Ufin{\Open,\Gamma} \ar[r] & \Ufin{\Open,\Omega} \ar[rr] & & \Sfin{\Open,\Open}\\
 & & \Sfin{\Gamma,\Omega} \ar[ru] & \\
\Sone{\Gamma,\Gamma} \ar[rruu] \ar[r] & \Sone{\Gamma,\Omega} \ar[ru] \ar[rr] & \ar[u] & \Sone{\Gamma,\Open} \ar[rruu] & & \\
 & & \Sfin{\Omega,\Omega} \ar@{-}[u] & \\
\Sone{\Omega,\Gamma} \ar[uu] \ar[r] & \Sone{\Omega,\Omega} \ar[rr] \ar[ur] \ar[uu] & & \Sone{\Open,\Open} \ar[uu]
}
$$

As we will see in the next section, the strongest possible properties that a filter may satisfy are $\Sfin{\Omega,\Omega}$ and $\Ufin{\Open,\Gamma}$. In particular, it is impossible for a filter to satisfy any of the 6 properties at the front of the 3-dimensional Scheepers diagram. Also, following standard terminology we have the following 
\begin{itemize}
\item $X$ is called Menger if $X$ is $\Sfin{\Open,\Open}$, and
\item $X$ is called Hurewicz if $X$ is $\Ufin{\Open,\Gamma}$.
\end{itemize}

 In Section 3, existence of filters with these properties will be established, and in the final section we show that the FUF-filters of Reznichenko and Sipacheva always have the Hurewicz property. 

See \cite{tsaban-menger_hurewicz_book} for a history of the Menger and Hurewicz properties and \cite{tsaban-zdomskyy-scales_fields_hurewicz} for the current status of existence of sets of reals with these properties. It is also interesting that the Menger property has been used to prove the existence of a non countable dense homogeneous filter (see \cite{CDH-menger}). 

\section{Reductions and relations}

From this point on, we will restrict our attention to filters defined on countable sets (and most of the times, on $\omega$) that contain the Frech\'et filter of cocountable sets. Recall that a set $\I\subset\mathcal{P}(X)$ is an ideal if $\I\sp\ast=\{A\subset X: X\setminus A\in\I\}$ is a filter. In this situation, we say that the ideal $\I$ and filter $\I\sp\ast$ are dual. Moreover, the function that takes each set in $\mathcal{P}(X)$ to its complement is a homeomorphism so in fact an ideal is homeomorphic to its dual filter. Thus, we may sometimes talk about ideals instead of filters when the ideal description is simpler.

For each $Y\subset\omega$, let $\up{Y}=\{X\subset\omega:Y\subset X\}$. The set $\up{Y}$ is always closed, if $Y$ is in a filter $\F$ then $\up{Y}$ is contained in $\F$ and if $Y$ is finite then $\up{Y}$ is also open.

Starting from Scheepers's diagram, we first rule out some properties that behave in a trivial way when we restrict to filters. Let us enumerate some known results about separable and metrizable spaces that we will use in our analysis. 

\begin{lemma}\label{folklore}
\begin{itemize}
\item[(i)] \cite[Theorem 2.2]{COC2} Every $\sigma$-compact set is Hurewicz.
\item[(ii)] \cite[Theorem 3.1]{COC2} If $X$ is Menger (Hurewicz) and $Y\subset X$ is closed, then $Y$ is Menger (Hurewicz, respectively) as well.
\item[(iii)] If $X$ is Menger (Hurewicz) and $K$ is $\sigma$-compact, then $X\times K$ is also Menger (Hurewicz, respectively).
\item[(iv)] \cite[Theorem 3.1]{COC2} The continuous image of a Menger (Hurewicz) space is also Menger (Hurewicz, respectively).
\item[(v)] \cite[p. 255]{COC2} If a space is the countable union of Menger (Hurewicz) spaces, then it is Menger (Hurewicz) as well.
\item[(vi)] The space $\baire$ is not Menger.
\item[(vii)] \cite[Theorem 2.3]{COC2} The Cantor set is not $\Sone{\Gamma,\Open}$.
\end{itemize}
\end{lemma}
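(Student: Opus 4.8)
The statements (i), (ii), (iv), (v) and (vii) come with explicit citations to \cite{COC2}, so no new argument is required for them; I will describe proofs only of the two items stated without a reference, namely (vi) and (iii).

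For (vi) the plan is to write down an explicit sequence of open covers of $\baire$ against which the Menger selection must fail. For each $n<\omega$ set $\U_n=\{\{f\in\baire:f(n)\le k\}:k<\omega\}$; since $f\mapsto f(n)$ is continuous and $\omega$ is discrete, each member of $\U_n$ is clopen, and the members increase with $k$ to all of $\baire$, so $\U_n\in\Open$. Any finite subfamily $\V_n\in[\U_n]^{<\omega}$ is governed by its largest threshold $g(n)<\omega$, whence $\bigcup\V_n=\{f:f(n)\le g(n)\}$. The point is then that $\bigcup_{n<\omega}\V_n$ omits the function $g+1$ given by $(g+1)(n)=g(n)+1$, because it satisfies $f(n)>g(n)$ for every $n$. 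Hence no choice of finite subfamilies covers $\baire$, so $\baire$ is not Menger. (This is essentially the classical remark that $\baire$ is unbounded in the eventual-domination order, but the direct cover computation is the cleanest self-contained version.)

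For (iii) the strategy is first to reduce to a compact factor and then to run a tube-lemma argument against the selection property of $X$. Write $K=\bigcup_{n<\omega}K_n$ with each $K_n$ compact; then $X\times K=\bigcup_{n<\omega}(X\times K_n)$, so by (v) it suffices to treat the case where $K$ itself is compact. Assume then that $K$ is compact and let $\{\U_n:n<\omega\}$ be open covers of $X\times K$. For a fixed $n$ and each $x\in X$, compactness of the slice $\{x\}\times K$ yields a finite $\mathcal{W}^n_x\in[\U_n]^{<\omega}$ covering it, and the tube lemma provides an open neighbourhood $O^n_x$ of $x$ with $O^n_x\times K\subseteq\bigcup\mathcal{W}^n_x$. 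This converts the selection problem on $X\times K$ into one for the open covers $\{O^n_x:x\in X\}$ of $X$.

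Now one applies the hypothesis on $X$. In the Menger case, choose finite $A_n\in[X]^{<\omega}$ so that the sets $O^n_x$ with $x\in A_n$, $n<\omega$, cover $X$; in the Hurewicz case, choose them so that $\{\bigcup_{x\in A_n}O^n_x:n<\omega\}$ is a $\gamma$-cover of $X$. Putting $\V_n=\bigcup_{x\in A_n}\mathcal{W}^n_x\in[\U_n]^{<\omega}$, a point $(p,q)\in X\times K$ lies in $\bigcup\V_n$ as soon as $p\in O^n_x$ for some $x\in A_n$; thus $\bigcup_{n}\V_n$ covers $X\times K$ in the Menger case, and in the Hurewicz case $(p,q)$ belongs to $\bigcup\V_n$ for all but finitely many $n$, exactly because $p$ does. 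Both parts are ultimately routine; the one point to watch is the Hurewicz bookkeeping, where one must check that the ``all but finitely many'' quantifier transfers through the tube construction without the coordinate $q\in K$ interfering. It does, precisely because each tube $O^n_x\times K$ is full in the $K$-direction, so membership of $(p,q)$ in $\bigcup\V_n$ is controlled entirely by the $X$-coordinate.
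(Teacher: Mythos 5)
The paper offers no proof of this lemma at all: it is stated as a list of known results, with items (i), (ii), (iv), (v), (vii) cited to \cite{COC2} and items (iii), (vi) left as unattributed folklore. So there is no argument of the paper to compare yours against line by line; your decision to supply proofs for exactly the two uncited items is the right reading of the statement, and both of your arguments are correct and are the standard ones. For (vi), the cover computation works (the only degenerate case, $\V_n=\emptyset$, is harmless, since the empty union is contained in any threshold set); alternatively, (vi) follows immediately from the Hurewicz characterization that the paper records as Proposition 2.3: the identity map of $\baire$ is a continuous image of $\baire$ that is dominating, so $\baire$ is not Menger. For (iii), the reduction to compact $K$ via item (v), followed by the tube-lemma argument, is exactly the textbook proof, and your closing observation --- that each tube $O^n_x\times K$ is full in the $K$-direction, so membership of $(p,q)$ in $\bigcup\V_n$ is decided by $p$ alone --- is precisely the point that makes the ``all but finitely many $n$'' quantifier transfer in the Hurewicz case. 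One pedantic remark, which is a defect of the paper's definitions rather than of your proof: since the paper defines a $\gamma$-cover to be an \emph{infinite} family of open sets, the family $\{\bigcup\V_n:n<\omega\}$ you produce could in principle be finite as a set (for instance if some $\bigcup\V_n$ already equals $X\times K$); the usual convention, implicit in \cite{COC2} and throughout this paper, is to treat such selections as sequences, or to allow the degenerate case in which some finite subfamily covers the whole space, and under that convention your argument is complete.
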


\begin{lemma}\cite[Theorem 3.9]{COC2}\label{sfinomegaomega}
A separable metrizable space $X$ is $\Sfin{\Omega,\Omega}$ if and only if $X\sp{n}$ is Menger for each $0<n<\omega$.
\end{lemma}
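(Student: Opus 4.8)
The plan is to prove both implications directly, organizing everything around the elementary correspondence $U\mapsto U^{n}$ between open subsets of $X$ and open subsets of $X^{n}$. The first thing I would record is that, since $(x_{1},\dots,x_{n})\in U^{n}$ exactly when $\{x_{1},\dots,x_{n}\}\subseteq U$, a family $\mathcal{U}$ of open subsets of $X$ is an $\omega$-cover if and only if $X\notin\mathcal{U}$ and $\{U^{n}:U\in\mathcal{U}\}$ covers $X^{n}$ for every $n$. This single fact translates ``$\omega$-cover of $X$'' into ``open cover of each finite power'' and back, and both directions rest on it.

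For the direction assuming every $X^{n}$ is Menger, I would fix a sequence $\langle\mathcal{U}_{m}:m<\omega\rangle$ of $\omega$-covers of $X$ and split $\omega$ into infinitely many infinite blocks $\omega=\bigsqcup_{n\geq 1}I_{n}$. On block $I_{n}$ the families $\{U^{n}:U\in\mathcal{U}_{m}\}$ (for $m\in I_{n}$) are open covers of $X^{n}$, so Mengerness of $X^{n}$ yields finite $\mathcal{V}_{m}\subseteq\mathcal{U}_{m}$ whose $n$-th powers cover $X^{n}$; by the observation this means $\bigcup_{m\in I_{n}}\mathcal{V}_{m}$ contains a member over every subset of $X$ of size at most $n$. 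Running this over all $n$ makes $\bigcup_{m}\mathcal{V}_{m}$ an $\omega$-cover, since every finite set is handled in the block matching its size and $X$ never appears. This is exactly $\Sfin{\Omega,\Omega}$.

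The harder direction assumes $X$ is $\Sfin{\Omega,\Omega}$ and fixes $n$ together with open covers $\langle\mathcal{O}_{m}:m<\omega\rangle$ of $X^{n}$; the obstacle is manufacturing $\omega$-covers of $X$ out of covers of $X^{n}$. My plan is, for each $m$, to let $\mathcal{U}_{m}$ be the collection of open $U\subseteq X$ such that $U^{n}$ is contained in a finite union of members of $\mathcal{O}_{m}$. That each $\mathcal{U}_{m}$ is an $\omega$-cover is the crux: given finite $S\subseteq X$, the set $S^{n}$ is finite, hence covered by a finite $\mathcal{F}\subseteq\mathcal{O}_{m}$, and the Wallace (tube) lemma, applied to the compact box $S^{n}$ inside the open set $\bigcup\mathcal{F}$, produces an open $U\supseteq S$ with $U^{n}\subseteq\bigcup\mathcal{F}$, so $U\in\mathcal{U}_{m}$ and $S\subseteq U$. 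If $X\in\mathcal{U}_{m}$ for some $m$ then $X^{n}$ is already covered by finitely many members of $\mathcal{O}_{m}$ and we are done, so we may assume each $\mathcal{U}_{m}$ is a genuine $\omega$-cover. Applying $\Sfin{\Omega,\Omega}$ gives finite $\mathcal{V}_{m}\subseteq\mathcal{U}_{m}$ with $\bigcup_{m}\mathcal{V}_{m}\in\Omega$; replacing each chosen $U$ by the finite subfamily of $\mathcal{O}_{m}$ witnessing $U\in\mathcal{U}_{m}$ yields finite $\mathcal{G}_{m}\subseteq\mathcal{O}_{m}$ with $\bigcup_{m}\mathcal{G}_{m}\supseteq\bigcup\{U^{n}:U\in\bigcup_{m}\mathcal{V}_{m}\}\supseteq X^{n}$, witnessing that $X^{n}$ is Menger.

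I expect the only real friction to be the Wallace-lemma step and the bookkeeping around the case $X\in\mathcal{U}_{m}$; the rest is routine translation along $U\mapsto U^{n}$. I would also note that separability and metrizability are not essential to these arguments beyond the Lindel\"of setting already fixed, since the powers $S^{n}$ used are finite and all selections are made from the given covers.
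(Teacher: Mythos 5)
Your proof is correct: both directions work as written, including the tube-lemma step (for a finite box $S^{n}$ inside $\bigcup\mathcal{F}$ one gets an open $U\supseteq S$ with $U^{n}\subseteq\bigcup\mathcal{F}$) and the handling of the degenerate case $X\in\mathcal{U}_{m}$. Note, however, that the paper itself offers no proof of this lemma --- it is quoted directly from [COC2, Theorem 3.9] --- and your argument (the correspondence $U\mapsto U^{n}$, the partition of $\omega$ into infinitely many infinite blocks, and the construction of $\omega$-covers of $X$ from open covers of $X^{n}$) is essentially the standard proof given in that reference, so there is nothing to reconcile with the present paper beyond the citation.
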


Now let $\F$ be a filter on $\omega$. If $\F$ is not the Frech\'et filter, then there is $x\in\F$ that is coinfinite. Then $\up{x}$ is a closed copy of the Cantor set contained in $\F$. Thus, by (vii) in Lemma \ref{folklore}, $\F$ is not $\Sone{\Gamma,\Open}$. With this observation, six of the properties of the Scheepers diagram are impossible for filters. Moreover, we have the following observation.

\begin{propo}\cite[Claim 20]{chod_rep_zdo-mathias_combinatorial_filters}
If a filter $\F$ is Menger (Hurewicz) then for all $0<n<\omega$, $\F\sp{n}$ is Menger (Hurewicz, respectively).
\end{propo}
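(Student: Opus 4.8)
The plan is to exhibit $\F^n$ as a continuous image of a space that is manifestly Menger (Hurewicz), using only the hypothesis on $\F$ together with the two defining features of a filter: it is upward closed and closed under finite intersections. The key point is that although a point of $\F^n$ carries $n$ independent filter-constrained coordinates, it can be reconstructed from a \emph{single} element of $\F$ (namely the intersection of its coordinates) together with $n$ completely unconstrained elements of $\csp$.

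Concretely, I would first fix $0<n<\omega$ and consider the product $Z=\F\times\csp^{\,n}$, viewed as a subspace of $\csp^{\,n+1}$. Since $\csp^{\,n}$ is homeomorphic to the Cantor set and is therefore compact (in particular $\sigma$-compact), and $\F$ is Menger (Hurewicz) by hypothesis, Lemma \ref{folklore}(iii) shows that $Z$ is Menger (Hurewicz, respectively).

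Next I would define $\varphi\colon Z\to\csp^{\,n}$ by
\[
\varphi(x,t_0,\dots,t_{n-1})=(x\cup t_0,\dots,x\cup t_{n-1}).
\]
This is continuous because $\cup$ is continuous on $\csp\times\csp$. Each output coordinate $x\cup t_i$ contains $x\in\F$, so by upward closure it belongs to $\F$; hence $\varphi$ in fact maps $Z$ into $\F^n$. To see that $\varphi$ is onto $\F^n$, given $(a_0,\dots,a_{n-1})\in\F^n$ set $x=a_0\cap\dots\cap a_{n-1}$, which lies in $\F$ because filters are closed under finite intersection, and put $t_i=a_i$; then $x\cup t_i=a_i$ for each $i$, so $\varphi(x,a_0,\dots,a_{n-1})=(a_0,\dots,a_{n-1})$. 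Thus $\F^n=\varphi[Z]$, and Lemma \ref{folklore}(iv) yields that $\F^n$ is Menger (Hurewicz, respectively).

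I do not anticipate a genuine obstacle; the only subtlety is to route the free coordinates through \emph{all} $n$ output slots, rather than pinning one slot to the intersection $x$, so that $\varphi$ is truly surjective. The essential use of the filter axioms is precisely this pairing of upward closure (to guarantee that the image lands in $\F^n$) with closure under finite intersection (to produce the witnessing element $x$); this is exactly what separates filters from arbitrary subspaces of $\csp$, for which the statement fails in general.
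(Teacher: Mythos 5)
Your proposal is correct and is essentially the paper's own proof: the paper uses the very same map $\phi\colon\F\times\csp\sp{n}\to\csp\sp{n}$, $\phi(\pair{F,x_0,\ldots,x_{n-1}})=\pair{F\cup x_0,\ldots,F\cup x_{n-1}}$, and concludes via the same items of Lemma \ref{folklore}. The only difference is that you spell out the surjectivity argument (taking $x=a_0\cap\ldots\cap a_{n-1}$), which the paper leaves implicit.
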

\begin{proof}
For $0<n<\omega$, let $\phi:\F\times\csp\sp{n}\to\csp\sp{n}$ be given by $\phi(\pair{F,x_0,\ldots,x_{n-1}})=\pair{F\cup x_0,\dots,F\cup x_{n-1}}$. Then $\phi$ is continuous and its image is $\F\sp{n}$. Then the result follows by Lemma \ref{folklore}.
\end{proof}

Thus, we are left with essentially two non-trivial properties of filters: Menger and Hurewicz. Let us start out by considering some simple results obtained from cardinality considerations. For $f,g\in\baire$, denote $f\leq\sp\ast g$ if and only if there is $N<\omega$ such that if $N\leq n<\omega$, then $f(n)\leq g(n)$. A family $B\subset\baire$ is (i) bounded if there is $g\in\baire$ with $f\leq\sp\ast g$ for every $f\in B$, (ii) dominating if for every $f\in\baire$ there is $g\in B$ with $f\leq\sp\ast g$. Recall the following classic result by Hurewicz.

\begin{propo}\cite{hur1927}\label{classichurewicz}
Let $X$ be a $0$-dimensional separable metric space.
\begin{itemize}
\item[(a)] $X$ is Menger if and only if every continuous image of $X$ in $\baire$ is not dominating.
\item[(b)] $X$ is Hurewicz if and only if every continuous image of $X$ in $\baire$ is bounded.
\end{itemize}
\end{propo}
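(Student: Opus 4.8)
Since both properties pass to continuous images (Lemma \ref{folklore}(iv)) and a $0$-dimensional separable metric space is Lindel\"of with a countable clopen base, the plan is to translate each equivalence into a statement about a subspace $Y\subseteq\baire$, analyzed through the increasing clopen covers $\U_n=\{U_{n,k}:k<\omega\}$, where $U_{n,k}=\{h:h(n)\leq k\}$. Each $U_{n,k}$ is clopen, $U_{n,k}\subseteq U_{n,k+1}$, and $\bigcup_k U_{n,k}=\baire$, so a finite subfamily $\V_n\subseteq\U_n$ contributes exactly one set $U_{n,g(n)}$ (its largest member), and the choices are recorded by a single $g\in\baire$.

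I would dispatch the easy implications directly. In (b), if $X$ is Hurewicz and $f\colon X\to\baire$ is continuous then $Y=f(X)$ is Hurewicz; applying $\Ufin{\Open,\Gamma}$ to $(\U_n)$ yields $g$ with $\{U_{n,g(n)}:n<\omega\}$ a $\gamma$-cover of $Y$, which says precisely that every $h\in Y$ satisfies $h(n)\leq g(n)$ for all but finitely many $n$, i.e.\ $h\leq^{*}g$; thus $Y$ is bounded. Conversely, if $X$ is not Hurewicz, fix a witnessing sequence of covers; using Lindel\"ofness and $0$-dimensionality I would refine it to increasing clopen covers $\U_n=\{U_{n,k}:k<\omega\}$ and define the continuous map $f(x)(n)=\min\{k:x\in U_{n,k}\}$. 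If $f(X)$ were bounded by some $g$, then $\{U_{n,g(n)}:n<\omega\}$ would be a $\gamma$-cover of $X$, contradicting the choice of the $\U_n$. The converse half of (a) is identical in spirit: from a sequence witnessing non-Mengerness build the same $f$, and if $f(X)$ were not dominating, a witness $\phi$ would make $\{U_{n,\phi(n)}:n<\omega\}$ a cover of $X$, again a contradiction.

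The delicate direction is the forward implication of (a). If $X$ is Menger and $f\colon X\to\baire$ is continuous, then $Y=f(X)$ is Menger, and I must produce $\phi$ with $\phi\not\leq^{*}h$ for every $h\in Y$, that is, with $\phi(n)>h(n)$ for \emph{infinitely many} $n$. The obstacle is that a single application of $\Sfin{\Open,\Open}$ to $(\U_n)$ only yields $g$ such that each $h\in Y$ has \emph{at least one} $n$ with $h(n)\leq g(n)$; this controls $h$ at a single coordinate, which is too weak to contradict domination. The remedy is to apply the Menger property to every tail: for each $j<\omega$, apply $\Sfin{\Open,\Open}$ to $(\U_n)_{n\geq j}$ to obtain $g_j$ such that every $h\in Y$ has some $n\geq j$ with $h(n)\leq g_j(n)$. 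Setting $\phi(n)=1+\max\{g_j(n):j\leq n\}$, for every $h\in Y$ and every $j$ there is $n\geq j$ with $h(n)\leq g_j(n)<\phi(n)$, so $\{n:\phi(n)>h(n)\}$ is infinite and $\phi\not\leq^{*}h$. Hence no $h\in Y$ dominates $\phi$, and $Y$ is not dominating.

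I expect the remaining points to be bookkeeping rather than conceptual: making the refinement to increasing clopen covers precise (so that finite subfamilies collapse to a single index and $f$ is continuous), and the standard degenerate cases in the $\gamma$-cover clauses (when some $U_{n,k}$ is already the whole space, or the chosen family is finite), all handled in the usual way. The one genuinely load-bearing idea is the tail-diagonalization in the forward direction of (a), which is exactly what upgrades the ``one coordinate'' strength of Menger to the ``infinitely many coordinates'' needed to defeat domination.
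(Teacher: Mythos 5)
There is nothing in the paper to compare you against on this one: Proposition \ref{classichurewicz} is stated as a classical theorem of Hurewicz and is only cited (\cite{hur1927}); the paper contains no proof of it and uses it as a black box. Judged on its own merits, your argument is correct and is essentially the standard modern proof of this classical characterization. The reduction of an arbitrary witnessing sequence of covers to countable increasing clopen covers uses exactly the stated hypotheses ($0$-dimensional, separable metric, hence Lindel\"of, with every open cover admitting a countable clopen refinement); the map $f(x)(n)=\min\{k:x\in U_{n,k}\}$ is continuous because $f^{-1}(\{h:h(n)=k\})=U_{n,k}\setminus U_{n,k-1}$ is clopen; and your dictionary between $\gamma$-covers (resp.\ covers) of the image and $\leq^{*}$-bounds (resp.\ failure of domination) is accurate in all four implications. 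You also correctly isolated the only non-routine point: a single application of $\Sfin{\Open,\Open}$ pins each $h$ down at just one coordinate, and some multiplicity of applications is genuinely required; your tail-diagonalization (applying Menger to $(\U_n)_{n\geq j}$ for every $j$ and setting $\phi(n)=1+\max\{g_j(n):j\leq n\}$) is the standard repair, equivalent to the usual device of splitting the index set into infinitely many infinite pieces and is not replaceable by any single application of the selection principle. The degenerate $\gamma$-cover issues you defer are indeed purely conventional --- one must treat the selected unions as a sequence allowing repetitions, exactly as is already needed for $\sigma$-compact spaces to be Hurewicz under the paper's definitions --- and they do not affect any step of your argument.
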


Thus, $\b$ is the size of the smallest non-Hurewicz set of reals and $\d$ is the size of the smallest non-Menger set of reals (see \cite{vd62} for an introduction to small cardinal numbers). Now, all (non-Frech\'et) filters are of size $\cont$ but there is still a natural and meaningful substitute for cardinality. Recall that a subset $\B$ of a filter $\F$ is a base (of $\F$) if for all $x\in\F$ there is $y\in\B$ with $y\subset x$.

\begin{lemma}\label{lemmabase}
\begin{itemize}
\item[(a)] Let $\F$ be a filter with base $\B$. If $\B$ is Menger (Hurewicz) then $\F$ is also Menger (Hurewicz, respectively).
\item[(b)] If $\B$ is a set with the finite intersection property and every finite power of $\B$ is Menger (Hurewicz), then the filter generated by $\B$ is also Menger (Hurewicz, respectively).
\end{itemize}
\end{lemma}
\begin{proof}
For (a), let $\phi:\B\times\csp\to\csp$ given by $\phi(\pair{x,y})=x\cup y$. Then the conclusion follows from Lemma \ref{folklore} and the fact that $\phi$ is a continuous function with image $\F$.

Now, let $\B$ have the finite intersection property. For each $n<\omega$, let $\phi_{n}:\B\sp{n}\to\B$ be defined by $\phi_{n}(\pair{x_0,\ldots,x_{n-1}})=x_0\cap\ldots\cap x_{n-1}$. Then notice that the filter generated by $\B$ has base $\bigcup\{\phi\sp{n}[\B\sp{n}]:n<\omega\}$ so by part (a) and Lemma \ref{folklore}, we are done.
\end{proof}

\begin{propo}\label{lessthanb}
$\b$ is the minimal character of a filter that is not Hurewicz.
\end{propo}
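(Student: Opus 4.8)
The plan is to establish the two inequalities separately: first that every filter of character $<\b$ is Hurewicz (so the character of a non-Hurewicz filter is at least $\b$), and then that there is a non-Hurewicz filter of character exactly $\b$. For the lower bound, suppose $\F$ has a base $\B$ with $|\B|<\b$. By Lemma \ref{lemmabase}(a) it suffices to show that $\B$, viewed as a subspace of the Cantor set $\csp$, is Hurewicz. Since $\B$ is a separable, metrizable, $0$-dimensional space, I would invoke Proposition \ref{classichurewicz}(b): it is enough to check that every continuous image of $\B$ in $\baire$ is bounded. Any such image has cardinality at most $|\B|<\b$, and by the definition of $\b$ every subfamily of $\baire$ of size $<\b$ is $\le^\ast$-bounded. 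Hence $\B$ is Hurewicz, and therefore so is $\F$.

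For the upper bound I would realize the witnessing filter on the countable set $\omega\times\omega$. Fix a $\le^\ast$-unbounded family $\{f_\alpha:\alpha<\b\}\subset\baire$ of size $\b$, and for each $\alpha$ set $G_\alpha=\{\pair{m,n}:n\ge f_\alpha(m)\}$. Let $\F$ be the filter generated by $\{G_\alpha:\alpha<\b\}$ together with the cofinite sets. Every finite intersection of generators has the form $G_g\cap C$, where $g$ is the pointwise maximum of finitely many of the $f_\alpha$'s and $C$ is cofinite; such sets are infinite, so $\F$ is a proper filter containing the Fréchet filter, and this description exhibits a base of size $\b\cdot\aleph_0=\b$. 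Thus the character of $\F$ is at most $\b$.

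To see that $\F$ is not Hurewicz, the key observation is that every $A\in\F$ has all of its vertical columns $A_m=\{n:\pair{m,n}\in A\}$ infinite, since $A$ contains the $m$-th column of some basic set $G_g\cap C$, which is the tail $[g(m),\infty)$ with finitely many points removed. Consequently the map $\psi:\F\to\baire$ given by $\psi(A)(m)=\min A_m$ is well defined. I would then verify that $\psi$ is continuous: the condition $\psi(A)(m)=k$ depends only on the finitely many coordinates $\pair{m,0},\dots,\pair{m,k}$, so each coordinate of $\psi$ is locally constant and $\psi$ is continuous into the product $\baire$. Since $\psi(G_\alpha)=f_\alpha$, the image $\psi[\F]$ contains the unbounded family $\{f_\alpha:\alpha<\b\}$ and is therefore unbounded, so by Proposition \ref{classichurewicz}(b) the filter $\F$ is not Hurewicz. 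Together with the lower bound this forces the character of $\F$ to equal $\b$.

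The main obstacle, and the reason for working over $\omega\times\omega$ with the ``above the graph'' sets $G_\alpha$, is to produce a map into $\baire$ that is continuous on \emph{all} of $\F$ while having unbounded image. The natural enumeration-type maps that would witness unboundedness for a filter over $\omega$ are continuous only on infinite sets, and the finite sets belonging to the dual ideal obstruct global continuity. The column structure of the $G_\alpha$ is exactly what guarantees that every element of $\F$ has infinite columns, which is what makes the column-minimum map $\psi$ continuous everywhere; verifying this continuity and the bound on the character are the two points that need care.
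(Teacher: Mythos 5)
Your proof is correct, and your construction coincides with the paper's: the same unbounded family $\{f_\alpha:\alpha<\b\}$, the same generating sets $G_\alpha=\{\pair{n,m}:m\geq f_\alpha(n)\}$ on $\omega\times\omega$ (the paper calls them $F_\alpha$ and does not bother adding the cofinite sets), and the same lower bound via Lemma \ref{lemmabase} together with the fact that every set of size less than $\b$ is Hurewicz, which you correctly unwind through Proposition \ref{classichurewicz}(b). The genuine difference is how non-Hurewiczness is certified. The paper stays at the level of covers, arguing directly from the definition of $\Ufin{\Open,\Gamma}$: for each $n$ it takes the cover $\U_n=\{U(n,m):m<\omega\}$, where $U(n,m)$ is the clopen set of those $x$ whose $n$-th column has minimum $m$, and checks that no selection of the form $\{U(n,0)\cup\dots\cup U(n,g(n)):n<\omega\}$ can be a $\gamma$-cover, since some $f_\beta$ exceeds $g$ on an infinite set. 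You instead package the same combinatorics into the column-minimum map $\psi$ and quote Proposition \ref{classichurewicz}(b). The two routes are exactly dual: $U(n,m)=\psi^{-1}(\{f\in\baire:f(n)=m\})$, and selecting finitely many members of $\U_n$ amounts to bounding $\psi(\cdot)(n)$; so the paper is in effect reproving the relevant direction of Hurewicz's characterization by hand, while you outsource it. Your version is cleaner and makes explicit two points the paper leaves implicit (that every element of $\F$ has infinite columns, which is needed both for $\psi$ to be defined and for $\U_n$ to cover $\F$, and that the canonical base has size $\b\cdot\aleph_0=\b$); the paper's version is self-contained at the level of covers. One correction to your closing commentary: the enumeration map is in fact continuous on any filter containing the Fr\'echet filter, since all members of such a filter are infinite, and the paper uses exactly such a map in Proposition \ref{bequalsd}. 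The real obstruction to running your argument for a filter on $\omega$ is not continuity but properness: an unbounded family of functions does not directly yield generators with the finite intersection property, whereas the sets $G_\alpha$ have it automatically ($G_{\alpha_1}\cap\dots\cap G_{\alpha_k}$ is again a set of the same form).
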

\begin{proof}
The fact that all filters of character $<\b$ are Hurewicz follows directly from Lemma \ref{lemmabase} and the fact that any set of size $<\b$ is Hurewicz. Now we construct a non-Hurewicz filter of size $\b$.

Let $\{f_\alpha:\alpha<\b\}$ be an unbounded family wrt $\leq\sp\ast$. Let $\F$ be the filter on $\omega\times\omega$ generated by the sets of the form $F_\alpha=\{\pair{n,m}:m\geq f_\alpha(n)\}$ with $\alpha<\b$.

For each $n,m<\omega$, let $U(n,m)=\{x\subset\omega\times\omega:m=\min\{k<\omega:\pair{n,k}\in x\}\}$. Then $\U_n=\{U(n,m):m<\omega\}$ is an open cover of $\F$ for each $n<\omega$. 

Assume there is a $\gamma$-cover of $\F$ of the form $\U=\{U(n,0)\cup\ldots\cup U(n,g(n)):n<\omega\}$ for some $g:\omega\to\omega$. Let $\beta<\b$ be such that $f_\beta\not\leq\sp\ast g$. Then there is a set $B\in[\omega]\sp\omega$ such that $g(n)<f_\beta(n)$ for all $n<\omega$. Then $F_\beta\notin U(n,m)$ for any $n\in B$ and $m<g(n)$. This implies that $\U$ is not a $\gamma$-cover. This contradiction proves that $\F$ is not Hurewicz.
\end{proof}

Using almost the same proof we have the following.

\begin{propo}\label{lessthand}
$\d$ is the minimal character of a filter that is not Menger.
\end{propo}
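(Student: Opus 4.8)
The plan is to mirror the proof of Proposition~\ref{lessthanb}, replacing the cardinal $\b$ and the notion of an unbounded family with $\d$ and a dominating family, and replacing the characterization of Hurewicz spaces by the characterization of Menger spaces from Proposition~\ref{classichurewicz}(a). The two assertions to establish are: first, that every filter of character $<\d$ is Menger; and second, that there is a non-Menger filter of character exactly $\d$.

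For the first assertion, I would invoke Lemma~\ref{lemmabase}(a) together with the fact that any subset of $\csp$ of size $<\d$ is Menger. The latter follows from Proposition~\ref{classichurewicz}(a): a set of size $<\d$ cannot map continuously onto a dominating family, since the continuous image has size $<\d$ and hence cannot dominate. Thus a base of size $<\d$ is Menger, and so is the filter it generates.

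For the second assertion, I would copy the construction verbatim, letting $\{f_\alpha:\alpha<\d\}$ be a dominating family (rather than unbounded) and defining $\F$ to be the filter on $\omega\times\omega$ generated by the sets $F_\alpha=\{\pair{n,m}:m\geq f_\alpha(n)\}$. The same open covers $\U_n=\{U(n,m):m<\omega\}$ are used. The key point to verify is that no single selection $\V_n\in[\U_n]\sp{<\omega}$ yields a cover in the Menger sense, i.e.\ that $\bigcup\{\V_n:n<\omega\}$ fails to be an $\omega$-cover (in fact fails to be a cover at all). Writing each finite selection as contained in $U(n,0)\cup\ldots\cup U(n,g(n))$ for a suitable $g:\omega\to\omega$, the dominating property gives some $\beta<\d$ with $f_\beta\not\leq\sp\ast g$ witnessed cofinally, so that $F_\beta$ is omitted from the selected sets for infinitely many coordinates $n$, and hence $F_\beta$ lies in no element of the resulting family.

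The main subtlety, and the only place where the Menger argument genuinely departs from the Hurewicz one, is the logical direction of the quantifiers: for Hurewicz one negates ``$\gamma$-cover'' and needs an \emph{unbounded} $f_\beta$, whereas for Menger one negates ``$\omega$-cover/cover'' and needs a \emph{dominating} family so that \emph{every} candidate $g$ arising from a finite-selection strategy is eventually dominated, forcing some $F_\beta$ to escape. I would therefore take care to argue that for an arbitrary choice function producing the finite sets $\V_n$, the associated $g$ is dominated by some $f_\beta$ on a cofinite set, and conclude that $F_\beta$ misses every $\bigcup\V_n$, so the union is not even a cover of $\F$. This confirms $\F$ is not Menger and completes the proof.
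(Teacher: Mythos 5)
Your overall strategy is exactly the one the paper intends: its proof of Proposition~\ref{lessthand} is literally ``use almost the same proof'' as Proposition~\ref{lessthanb}, with a dominating family in place of an unbounded one, and your first half (filters of character $<\d$ are Menger, by Lemma~\ref{lemmabase}(a) together with Proposition~\ref{classichurewicz}(a)) is correct. However, the final step --- the one you yourself single out as the crux --- has a genuine gap. From the fact that $g$ is dominated by some $f_\beta$ ``on a cofinite set'' you conclude that $F_\beta$ misses \emph{every} element of $\bigcup\{\V_n:n<\omega\}$. This does not follow: $\leq\sp\ast$-domination only gives $f_\beta(n)>g(n)$ for all but finitely many $n$, hence only that $F_\beta$ avoids the sets $U(n,m)$, $m\leq g(n)$, for cofinitely many columns $n$. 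On the finitely many exceptional columns one may have $f_\beta(n)\leq g(n)$, and then $F_\beta\in U(n,f_\beta(n))\in\V_n$, so $F_\beta$ is covered after all. Your middle paragraph is even weaker: ``$f_\beta\not\leq\sp\ast g$ witnessed cofinally'' is what an \emph{unbounded} family provides, and avoidance at infinitely many $n$ only refutes the $\gamma$-cover property --- that is precisely the Hurewicz argument of Proposition~\ref{lessthanb}, not a refutation of the Menger property, which requires a single filter element escaping \emph{all} selected sets.

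The gap is fixable in one line, but the fix needs an ingredient you never invoke: the standing convention of Section 2 that all filters under consideration contain the Fr\'echet filter (equivalently, one could close the dominating family under finite modifications). Choose $\beta<\d$ with $g+1\leq\sp\ast f_\beta$ (the strict inequality matters: $f_\beta(n)=g(n)$ would still put $F_\beta$ inside $U(n,g(n))$), and put $F=F_\beta\setminus\{\pair{n,m}:m\leq g(n)\}$. Since $f_\beta(n)>g(n)$ for all but finitely many $n$, only finitely many points of $F_\beta$ are removed, so $F\in\F$ because $\F$ contains all cofinite subsets of $\omega\times\omega$. Now the minimum of every column of $F$ exceeds $g(n)$, so $F\notin U(n,m)$ for every $n<\omega$ and every $m\leq g(n)$; hence $\bigcup\{\V_n:n<\omega\}$ is not even a cover of $\F$, and $\F$ is not Menger. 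With this repair (and the observation that the first half forces the character of $\F$ to be exactly $\d$), your argument goes through and coincides with the paper's intended proof.
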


Thus, filters of small character are trivial examples of filters that are Menger or Hurewicz. Another trivial way of obtaining Menger or Hurewicz examples of filters is by considering $F_\sigma$ filters ((i) in Lemma \ref{folklore}). For example, every countably generated filter is $F_\sigma$. See \cite{hru-combinatorics_filters_ideals} for examples of $F_\sigma$ ideals. Then we would like to find filters that are Menger or Hurewicz, that are not $F_\sigma$ and such that their characters are at least $\d$ or $\b$, respectively. The interesting question we leave open is the following.

\begin{ques}\label{mengernonhurewicz}
Does there exist a Menger filter of character $\d$ that is not Hurewicz?
\end{ques}

Before starting to give consistent answers to the two questions above, we would like to mention a relation between filters and a certain notion of forcing.  For every filter $\F$ on $\omega$, $\mathbb{M}_\F$ denotes the Mathias forcing with respect to $\F$. It is shown on \cite{chod_rep_zdo-mathias_combinatorial_filters} that some properties of this forcing notion are equivalent to $\F$ being either Menger or Hurewicz. 

It turns out that Menger filters are those called Canjar filters in \cite{canjar1} and Hurewicz filters are called strongly Canjar in \cite{canjar2}. In Proposition 3 of \cite{canjar1}, a combinatorial characterization of Canjar filters is given.  Also, there are constructions of Canjar MAD families, Canjar ideals and Canjar ultrafilters using additional assumptions.

Since ultrafilters are non-meager (\cite[Theorem 4.1.1]{bart}), it follows from Proposition \ref{classichurewicz} that there are no Hurewicz ultrafilters. This is mentioned in \cite[Corollary 14]{canjar2} in the language of Canjar filters. According to \cite[Proposition 2]{canjar1}, $\d=\cont$ implies there is a Canjar (thus, Menger) ultrafilter. Moreover, the characterization from \cite{canjar1} mentioned above implies that a Menger ultrafilter is a $P$-point. Thus, Menger ultrafilters consistently do not exist.

Finally, let us recall that the well-known Hurewicz theorem states that if $X\subset\csp$ is an analytic set, then either $X$ is $F_\sigma$ or $X$ contains a closed copy of $\baire$ (\cite[21.18]{kechris}). Since the Menger property is hereditary to closed sets and $\baire$ is not Menger (Lemma \ref{folklore}), it follows that the examples we are looking for must be non-analytic. 

\section{Menger and Hurewicz examples}

A relatively easy way to define a Menger filter in ZFC is as follows: Given a set $X\subset\csf$, we define the ideal $\I_X$ to be the ideal (in the countable set $2\sp{<\omega}$) generated by finite unions of branches $C_x=\{x\!\!\restriction_{n}:n<\omega\}$ where $x\in X$. This definition is due to Nyikos \cite{nyikos-cantortree_fu}. Also, denote by $\F_X$ the filter dual to $\I_X$. If $t\in 2\sp{<\omega}$, let $\pair{t}=\{x\in\csf:t\subset x\}$.

\begin{lemma}\label{refining}
Let $\U$ be an $\omega$-cover of some set $X\subset\csf$. Then there exists a countable $\omega$-cover $\V$ that refines $\U$ and such that its elements are sets of the form $\pair{t_0}\cup\ldots\cup\pair{t_k}$ such that $\{t_0,\ldots,t_k\}\subset 2\sp{m}$ for some $m<\omega$.
\end{lemma}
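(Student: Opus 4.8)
The plan is to let $\V$ consist of \emph{every} clopen subset of $\csf$ that is contained in some member of $\U$, and then to check that this single family satisfies all the demands. First I would record the elementary fact that a subset $C\subseteq\csf$ is clopen if and only if it has the required form: being open it is a union of basic clopen sets $\pair{t}$, being compact it is a \emph{finite} such union $\pair{s_0}\cup\dots\cup\pair{s_j}$, and putting $m=\max_{i\le j}|s_i|$ and replacing each $\pair{s_i}$ by $\bigcup\{\pair{t}:t\in 2\sp{m},\ s_i\subset t\}$ rewrites $C$ as $\pair{t_0}\cup\dots\cup\pair{t_k}$ with $\{t_0,\dots,t_k\}\subset 2\sp{m}$. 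Since $2\sp{<\omega}$ is countable there are only countably many clopen sets, so
\[\V=\{C\subseteq\csf:C\text{ is clopen and }C\subseteq U\text{ for some }U\in\U\}\]
is a countable family each of whose members has the prescribed shape, and it refines $\U$ by construction.

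It then remains to see that $\V$ is an $\omega$-cover of $X$. Given $S=\{x_0,\dots,x_{n-1}\}\in[X]\sp{<\omega}$, I would use that $\U$ is an $\omega$-cover to fix $U\in\U$ with $S\subset U$; since $U$ is open, for each $i<n$ there is an initial segment $t_i\subset x_i$ with $\pair{t_i}\subseteq U$, whence $C=\pair{t_0}\cup\dots\cup\pair{t_{n-1}}$ is clopen with $S\subseteq C\subseteq U$, so $C\in\V$ after rewriting it at a common level as above. For the non-triviality clause, if some $C\in\V$ satisfied $X\subseteq C$ then $X\subseteq U$ for the $U\in\U$ witnessing $C\in\V$, contradicting $X\notin\U$; hence $X\notin\V$ and $\V$ is an $\omega$-cover.

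None of this is genuinely difficult; I expect the only points needing care to be purely bookkeeping: the passage from a finite union of cylinders of mixed lengths to one at a single level $m$, and the observation that the non-triviality of $\V$ is inherited from that of $\U$. One should also note at the outset that we may assume each member of $\U$ to be open in $\csf$ (rather than merely relatively open in $X$), since replacing a relatively open $U$ by an open $\tilde U\subseteq\csf$ with $\tilde U\cap X=U$ affects neither the hypothesis that $\U$ is an $\omega$-cover nor the conclusion, and makes the containments $\pair{t_i}\subseteq U$ meaningful. The main conceptual content is simply that the canonical clopen basis of the Cantor set is countable and refines every open cover, so there is no real obstacle here.
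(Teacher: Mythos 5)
Your proof is correct, and it takes a genuinely different route from the paper's. The paper argues through finite powers: for each $n<\omega$ and each $\bar{x}=\pair{x_0,\ldots,x_{n-1}}\in X\sp{n}$ it picks $U(\bar{x})\in\U$ with $\{x_0,\ldots,x_{n-1}\}\subset U(\bar{x})$, shrinks it to a finite union $V(\bar{x})$ of cylinders at a single level contained in $U(\bar{x})$, observes that $\{V(\bar{x})\sp{n}:\bar{x}\in X\sp{n}\}$ is an open cover of the Lindel\"of space $X\sp{n}$, extracts a countable $\V_n$ whose $n$th powers still cover $X\sp{n}$, and sets $\V=\bigcup\{\V_n:n<\omega\}$; this is the standard device for producing countable sub-$\omega$-covers, and it works whenever all finite powers of $X$ are Lindel\"of, with no reference to the ambient space. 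You instead exploit a special feature of the Cantor set: its clopen algebra is countable, so the family of \emph{all} clopen sets contained in some member of $\U$ is automatically countable, automatically of the required form, and automatically a refinement, leaving only a one-line verification of the $\omega$-cover property. Your argument is shorter and more elementary, avoiding the finite-power machinery entirely, though it is tied to having a zero-dimensional compact metrizable ambient space; the paper's argument is the one that would survive for subsets of an arbitrary separable metric space. A small bonus of your write-up is that you make explicit two points the paper glosses over: that members of $\U$ may be assumed open in $\csf$ rather than merely relatively open in $X$, and that no member of $\V$ contains $X$ (the non-triviality clause of an $\omega$-cover). Indeed, with your reduction to ambient open sets $\tilde{U}$ satisfying $\tilde{U}\cap X=U$, the non-triviality argument is airtight: $X\subseteq C\subseteq\tilde{U}$ would force $U=\tilde{U}\cap X=X$, so that $X\in\U$, contradicting the hypothesis.
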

\begin{proof}
Fix $n<\omega$ for the moment. For each $x=\pair{x_0,\ldots,x_{n-1}}\in X\sp{n}$, choose any $U(x)\in\U$ such that $\{x_0,\ldots,x_{n-1}\}\subset U(x)$. Then there is $m<\omega$ such that $\pair{x_i\!\!\restriction_{m}}\subset U(x)$ for all $i<n$. Let $V(x)=\bigcup\{\pair{x_i\!\!\restriction_{m}}:i<n\}$. Then the collection $\{V(x)\sp{n}:x\in X\sp{n}\}$ is an open cover of $X\sp{n}$. Thus, there is a countable collection $\V_n\subset\{V(x):x\in X\sp{n}\}$ such that the $n$th powers of $\V_n$ cover $X\sp{n}$. The cover we are looking for is $\V=\bigcup\{\V_n:n<\omega\}$.
\end{proof}

An open cover $\U$ of a space $X$ is said to be a groupable $\omega$-cover if there is a partition $\U=\bigcup\{\U_n:n<\omega\}$ where each $\U_n$ is finite and for all $F\in[X]\sp{<\omega}$, for all but finitely many $n<\omega$ there is $U\in\U_n$ with $F\subset U$. The class of groupable $\omega$-covers is denoted by $\Omega\sp{\rm gp}$.

\begin{lemma}\cite{COC7}\label{prodhurewicz}
A separable metrizable space $X$ has all its finite powers Hurewicz if and only if it is $\Sfin{\Omega,\Omega\sp{\rm gp}}$.
\end{lemma}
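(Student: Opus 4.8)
The plan is to prove the two implications separately, using throughout the elementary correspondence between $\omega$-covers of $X$ and open covers of the finite powers $X\sp{n}$ induced by the power operation $U\mapsto U\sp{n}$. Two observations drive everything. First, if $\U$ is an $\omega$-cover of $X$ then $\{U\sp{n}:U\in\U\}$ is an $\omega$-cover of $X\sp{n}$: given finitely many points of $X\sp{n}$, collect all of their coordinates into a single finite $F\subset X$ and pick $U\in\U$ with $F\subset U$. Second, dually, a groupable $\omega$-cover $\bigcup\{\mathcal{G}_j:j<\omega\}$ of $X$ induces a $\gamma$-cover $\{\bigcup\{U\sp{n}:U\in\mathcal{G}_j\}:j<\omega\}$ of $X\sp{n}$, since a tuple lies in the $j$th set as soon as some member of $\mathcal{G}_j$ contains its coordinates, which happens for all but finitely many $j$. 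These translate covering data back and forth between $X$ and its powers.

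For the implication from finite powers to the selection principle, assume every $X\sp{n}$ is Hurewicz and let $\{\U_n:n<\omega\}$ be $\omega$-covers of $X$. For each fixed $m$ I would apply the Hurewicz property of $X\sp{m}$ to the covers $\{U\sp{m}:U\in\U_n\}$ $(n<\omega)$ to obtain finite $\V\sp{(m)}_n\in[\U_n]\sp{<\omega}$ such that $\{\bigcup\{U\sp{m}:U\in\V\sp{(m)}_n\}:n<\omega\}$ is a $\gamma$-cover of $X\sp{m}$; evaluated on tuples listing a finite $F\subset X$ with $|F|\le m$, this says that for all but finitely many $n$ some $U\in\V\sp{(m)}_n$ satisfies $F\subset U$. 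I would then diagonalize, putting $\V_n=\bigcup\{\V\sp{(m)}_n:m\le n\}$ and taking the groups $\mathcal{G}_n=\V_n$. For any finite $F$, applying the above with $m=|F|$ shows that for all sufficiently large $n$ some member of $\V_n$ contains $F$, so $\bigcup\{\V_n:n<\omega\}$ is an $\omega$-cover which, grouped by $\{\V_n\}$, is a groupable $\omega$-cover, witnessing $\Sfin{\Omega,\Omega\sp{\rm gp}}$. I want to emphasize that the grouping is indispensable: one must not first close the $\U_n$ under finite unions, since on a compact factor that closure contains $X$ and would spuriously strengthen the conclusion to the selection of an honest $\gamma$-cover, which is false in general (e.g.\ the Cantor set is not $\Sone{\Gamma,\Open}$ by Lemma~\ref{folklore}).

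For the converse, assume $X$ is $\Sfin{\Omega,\Omega\sp{\rm gp}}$, fix $m$, and let $\{\W_k:k<\omega\}$ be open covers of $X\sp{m}$. The plan is to replace each $\W_k$ by an $\omega$-cover $\U_k$ of $X$ whose power cover $\{U\sp{m}:U\in\U_k\}$ refines $\W_k$, apply $\Sfin{\Omega,\Omega\sp{\rm gp}}$ to $\{\U_k:k<\omega\}$ to obtain a groupable $\omega$-cover, and push its groups through the correspondence of the first paragraph to manufacture a $\gamma$-cover of $X\sp{m}$ assembled from finitely many members of the $\W_k$, exactly as $\Ufin{\Open,\Gamma}$ requires. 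The verification that each group is built from only finitely many of the original covers, so that it really yields a $\Ufin{\Open,\Gamma}$-selection, is routine and parallels the proof of Lemma~\ref{sfinomegaomega}.

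The step I expect to be the main obstacle is the refinement just invoked: producing from an open cover $\W$ of $X\sp{m}$ an $\omega$-cover $\U$ of $X$ with $\{U\sp{m}:U\in\U\}$ refining $\W$. The naive assignment to a tuple $\pair{x_0,\ldots,x_{m-1}}$ of a box $\prod_{i<m}O_i\subset W$ with $x_i\in O_i$ fails, because forcing $U\sp{m}\subset W$ demands $U\subset\bigcap_{i<m}O_i$, while $U$ must also contain every $x_i$, and in general $x_j\notin O_i$ for $i\ne j$. The remedy is to first pass to the $\omega$-cover of $X\sp{m}$ generated by $\W$ (its closure under finite unions, legitimate once the compact case is handled directly via Lemma~\ref{folklore}), so that a single member contains the whole finite set of tuples whose coordinates lie in $\{x_0,\ldots,x_{m-1}\}$; intersecting box-neighborhoods across all these coordinate patterns then yields an open $U$ containing $x_0,\ldots,x_{m-1}$ with $U\sp{m}\subset W$. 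Keeping track of the fact that the family so obtained refines $\W$ only as an ``$m$-cover'' of $X$ (covering finite sets of size at most $m$) and reconciling this with the need to feed genuine $\omega$-covers into $\Sfin{\Omega,\Omega\sp{\rm gp}}$ is the delicate point; it is the same difficulty met in the Menger case of Lemma~\ref{sfinomegaomega} and is overcome in the same fashion.
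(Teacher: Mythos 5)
The paper does not actually prove this lemma: it is quoted from Ko\v cinac and Scheepers \cite{COC7}, so your proposal can only be compared with the standard argument there. Your translation machinery between $\omega$-covers of $X$ and open covers of the powers $X^n$ is the right engine, and your second direction ($\Sfin{\Omega,\Omega^{\rm gp}}$ implies all finite powers Hurewicz) is sketched soundly: the box-intersection refinement you describe is exactly the device in the proof of Lemma~\ref{sfinomegaomega}, and (applied to the finite set of all $m$-tuples with coordinates in $F$, for arbitrary finite $F$) it in fact yields genuine $\omega$-covers, so the ``$m$-cover'' worry evaporates; the reindexing of groups against the original covers is indeed routine, but note that it works only because the groups of a groupable cover are \emph{pairwise disjoint} and finite, which forces the least index contributing to the $j$-th group to tend to infinity.

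That same disjointness is where your first direction has a genuine gap. At the end you declare that $\bigcup_n\V_n$ is groupable ``grouped by $\{\V_n\}$,'' but groupability requires a \emph{partition} of the selected cover into finite pieces, and nothing in your construction makes the families $\V_n$ pairwise disjoint: the same open set can be selected into infinitely many $\V_n$ (for instance when all the $\U_n$ are one and the same countable $\omega$-cover, the Hurewicz selections handed to you may repeat the same sets over and over). What you actually proved is the weaker statement $(\ast)$: for every finite $F$, all but finitely many $\V_n$ contain a member covering $F$. This does not yield a partition by naive disjointification (assigning each repeated set to its first group), since the unique witness for $F$ inside a late $\V_n$ may be precisely a set that was already placed in an earlier group; and $(\ast)$ alone does not imply that \emph{some} partition works --- that implication is essentially a Hurewicz-type (boundedness) statement, not pure combinatorics. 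The repair, which is the actual content of the \cite{COC7} proof, is a second application of the hypothesis: having obtained a countable $\omega$-cover $\V=\{W_j:j<\omega\}$ from the selections (for which all powers Menger, i.e.\ Lemma~\ref{sfinomegaomega}, already suffices), observe that every tail $\{W_j:j\geq k\}$ is again an $\omega$-cover (in an $\omega$-cover not containing $X$, every finite set lies in infinitely many members), and apply the Hurewicz property of each $X^m$ to the sequence of \emph{tail} covers, diagonalizing over $m\leq k$ exactly as you did. Because the $k$-th selection is drawn from the $k$-th tail, one can recursively extract selections occupying pairwise disjoint blocks of indices, and after distributing the unused $W_j$ among these blocks one obtains an honest partition witnessing groupability. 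Without this tail device your first direction stops one real step short of the conclusion.
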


In the following result, item (b) can be essentially obtained from \cite[Proposition 14]{canjar1} and \cite{chod_rep_zdo-mathias_combinatorial_filters}. However, we include a direct prove that uses only topological notions.

\begin{thm}\label{branches}
Let $X\subset\csf$. Then
\begin{itemize}
\item[(a)] $X$ is $F_\sigma$ if and only if $\F_X$ is $F_\sigma$,
\item[(b)] every finite power of $X$ is Menger if and only if $\F_X$ is Menger, and
\item[(c)] every finite power of $X$ is Hurewicz if and only if $\F_X$ is Hurewicz.
\end{itemize}
\end{thm}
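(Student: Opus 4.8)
The theorem has three parts connecting combinatorial properties of $X \subset \csf$ to topological properties of the filter $\F_X$. The plan is to exploit the tree structure of $\I_X$ together with the refining lemma (Lemma~\ref{refining}) and the power-vs-groupable characterizations (Lemmas~\ref{sfinomegaomega} and~\ref{prodhurewicz}).

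\begin{proof}[Proof proposal]
For part (a), I would argue both directions using the structure of $\I_X$. The forward direction: if $X$ is $F_\sigma$ in $\csf$, write $X=\bigcup_n K_n$ with each $K_n$ compact. The key observation is that for a \emph{compact} set $K\subset\csf$, the ideal $\I_K$ generated by finite unions of branches from $K$ is itself $F_\sigma$ (indeed the branches $\{C_x:x\in K\}$ form a compact set in $\mathcal P(2\sp{<\omega})$, so the finite unions are $\sigma$-compact). Since $\I_X$ is generated by $\bigcup_n\{C_x:x\in K_n\}$, one expresses $\I_X$ as a countable union of $\sigma$-compact pieces and checks this is $F_\sigma$. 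For the converse, I would take the contrapositive: if $X$ is not $F_\sigma$, then by the Hurewicz dichotomy (\cite[21.18]{kechris}, quoted in the excerpt) $X$ contains a closed copy of $\baire$; from such a copy one reads off a closed copy of $\baire$ inside $\I_X$ (and hence in $\F_X$), which forces $\F_X$ to be non-$F_\sigma$ since $\baire$ is not $\sigma$-compact. The natural intermediate claim is that a set and the ideal it generates have the same descriptive complexity at the $F_\sigma$ level, which I expect to be the technical heart of~(a).

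For parts (b) and (c), I plan to invoke the reductions already assembled. By Lemma~\ref{sfinomegaomega}, ``every finite power of $X$ is Menger'' is equivalent to $X$ being $\Sfin{\Omega,\Omega}$, and by Lemma~\ref{prodhurewicz} ``every finite power of $X$ is Hurewicz'' is equivalent to $X$ being $\Sfin{\Omega,\Omega\sp{\rm gp}}$. So it suffices to relate these selection properties of $X$ to the Menger/Hurewicz property of the filter $\F_X$. For the implication from $X$ to $\F_X$, suppose we are given open covers of $\F_X$; using Lemma~\ref{refining} I would first replace each cover by a countable refinement whose members are finite unions of basic clopen cylinders $\pair{t}$ with $t$ ranging over a single level $2\sp m$. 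Such cylinder-covers of the filter can be pulled back to $\omega$-covers of the appropriate finite power of $X$, where the selection hypothesis ($\Sfin{\Omega,\Omega}$ or $\Sfin{\Omega,\Omega\sp{\rm gp}}$) applies; pushing the resulting selections forward yields the desired Menger (resp.\ Hurewicz) selection for $\F_X$.

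The reverse implications, from $\F_X$ back to the finite powers of $X$, I would handle by the homeomorphism between $\F_X$ and $\I_X$ together with explicit continuous maps exhibiting each $X\sp n$ as a continuous image of (a closed piece of) the filter. Concretely, a point $x=\pair{x_0,\dots,x_{n-1}}\in X\sp n$ corresponds to the finite union of branches $C_{x_0}\cup\dots\cup C_{x_{n-1}}\in\I_X$, and the assignment reading a tuple off a union of branches should be continuous on a suitable closed subset; since Menger and Hurewicz are preserved by closed subspaces and continuous images (Lemma~\ref{folklore}(ii),(iv)), the finite powers inherit the property. The main obstacle, and the step I would spend the most care on, is making these cylinder-to-power translations precise and genuinely continuous: the dictionary ``finite union of cylinders at level $m$ in $\csf$'' $\leftrightarrow$ ``finite set in $\F_X$ and its complement in $\I_X$'' is where the combinatorics of Nyikos's ideal must be matched exactly against the levels of the tree $2\sp{<\omega}$, and getting the quantifier on $m$ uniform across the selection is the delicate point.
\end{proof}
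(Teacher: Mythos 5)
Your argument for the converse of part (a) is the decisive gap. The Hurewicz dichotomy \cite[21.18]{kechris} is a theorem about \emph{analytic} sets, but Theorem \ref{branches} is stated for arbitrary $X\subset\csf$, and it is exactly the non-analytic case that matters: the paper itself notes that any Menger, non-$F_\sigma$ filter must come from a non-analytic set. For general $X$, ``not $F_\sigma$'' does not yield a closed copy of $\baire$ (a Bernstein set is not $F_\sigma$ yet contains no closed copy of $\baire$). Worse, the set from \cite{bart-tsaban} to which this theorem is applied in the very next corollary --- of size $\b$, all finite powers Hurewicz, not $F_\sigma$ --- \emph{cannot} contain a closed copy of $\baire$, since Menger is closed-hereditary and $\baire$ is not Menger; so your contrapositive fails precisely in the intended application. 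The paper's proof of this direction is instead combinatorial and needs no descriptive-set-theoretic hypothesis: if $\I_X$ is $F_\sigma$, Mazur's theorem provides a lower semicontinuous submeasure $\phi$ with $\I_X=\{A\subset 2^{<\omega}:\phi(A)<\infty\}$, and then $X=\csf\setminus\bigcap_{n<\omega}U_n$ where $U_n=\bigcup\{\pair{s}:\phi(\{s\!\!\restriction_m:m\leq|s|\})>n\}$.

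The backward directions of (b) and (c) also have a genuine unresolved step. The map that ``reads a tuple off a union of branches'' cannot be made continuous into $X^n$: it is only well defined and continuous into the unordered symmetric product $[X]^{\leq n}$ with the Vietoris topology, because branches merge in limits (if $y_k\to x$ with $y_k\neq x$, then $C_x\cup C_{y_k}\to C_x$ in $\mathcal{P}(2^{<\omega})$), and no fixed ordering or padding convention survives merges occurring at different positions of the tuple. To finish along your lines you would still need to pass from $[X]^{\leq n}$ to $X^n$ (say, via preservation of Menger/Hurewicz under perfect preimages, a fact not in Lemma \ref{folklore}), and you never address this. The paper avoids the issue entirely by running the cover translation in the direction opposite to yours: it refines $\omega$-covers of $X$ by Lemma \ref{refining} into finite unions of cylinders on single levels, replaces each $U=\bigcup_{i\leq k_U}\pair{t(U,i)}$ with $t(U,i)\in 2^{m(U)}$ by the clopen set $\up{x_U}$, where $x_U=2^{m(U)}\setminus\{t(U,0),\ldots,t(U,k_U)\}$, checks that these sets cover $\F_X$, and then translates a Menger (resp.\ Hurewicz) selection for $\F_X$ back into an $\Sfin{\Omega,\Omega}$ (resp.\ $\Sfin{\Omega,\Omega^{\mathrm{gp}}}$) witness for $X$; the forward implications of all three parts are obtained at once from the observation that $\I_X$ is the projection of a closed subset of $[X]^{<\omega}\times\mathcal{P}(2^{<\omega})$, after which Lemma \ref{folklore} applies. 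Finally, note that your forward plan misuses Lemma \ref{refining}: that lemma refines $\omega$-covers of subsets of $\csf$, not open covers of $\F_X\subset\mathcal{P}(2^{<\omega})$; the correct tool for covers of a filter is the compactness trick $\up{F}=\bigcap_{k<\omega}\up{(F\cap k)}$ that the paper uses in the FUF section.
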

\begin{proof}
First, consider $[X]\sp{<\omega}$ with the Vietoris topology and define the set
$$
\mathcal{G}=\big\{\pair{F,Y}\in[X]\sp{<\omega}\times\mathcal{P}(2\sp{<\omega}):Y\subset\bigcup\{C_x:x\in F\}\big\}
$$

Assume that $\pair{F,Y}\in([X]\sp{<\omega}\times\mathcal{P}(2\sp{<\omega}))\setminus\mathcal{G}$. Then there is $s\in Y$ with $s\notin\bigcup\{C_x:x\in F\}$. Then $\{\pair{G,Z}:G\subset X\setminus\pair{s},s\in Z\}$ is an open set that contains $\pair{F,Y}$ and is disjoint from $\mathcal{G}$. Thus, $\mathcal{G}$ is closed in $[X]\sp{<\omega}\times\mathcal{P}(2\sp{<\omega})$. Notice that $\I_X$ is the projection of $\mathcal{G}$ into the second coordinate. Then by Lemma \ref{folklore} we have that $\F_X$ has each of the properties in turn if every finite power of $X$ has the corresponding one.

Now assume that $\I_X$ is $F_\sigma$. Then according to Mazur's theorem (see \cite[Theorem 1.5]{hru-combinatorics_filters_ideals}) there exists a lower semicontinuous submeasure $\phi:\mathcal{P}(2\sp{<\omega})\to[0,\infty]$ with $\I_X=\{A\subset 2\sp{<\omega}:\phi(A)<\infty\}$. This means that (a) $\phi(\emptyset)=0$, (b) $\phi(A)\leq\phi(B)$ if $A\subset B$, (c) $\phi(A\cup B)\leq\phi(A)+\phi(B)$, (d) $\phi(F)<\infty$ if $F\in[2\sp{<\omega}]\sp{<\omega}$ and (e) $\phi(A)=\lim_{n\to\infty}{\phi(A\cap2\sp{n})}$. For $n<\omega$, define $U_n=\bigcup\{\pair{s}:\phi(\{s\!\!\restriction_m:m\leq|s|\})>n\}$, this is an open set of $\csf$. Then it is enough to notice that $X=\csf\setminus\bigcap\{U_n:n<\omega\}$. Thus, (a) holds.

In order to prove (b), according to Lemma \ref{sfinomegaomega} it is enough to prove that $X$ is $\Sfin{\Omega,\Omega}$ assuming that $\F_X$ is Menger. Let $\{\U_n:n<\omega\}$ be a sequence of $\omega$-covers of $X$. By Lemma \ref{refining}, we may assume that for each $U\in\U_n$ there is $m(U)<\omega$ and $\{t(U,0),\ldots,t(U,k_U)\}\subset 2\sp{m(U)}$ such that $U=\bigcup\{\pair{t(U,i)}:i\leq k_U\}$.

Fix $n<\omega$ for a moment. For each $U\in\U_n$, define 
$$
x_U=2\sp{m(U)}\setminus\{t(U,0),\ldots,t(U,k_U)\}.
$$
We claim that $\V_n=\{\up{x_U}:U\in\U_n\}$ is a cover of $\F_X$. If $F\in\F_X$, then there are $x_0,\ldots,x_k\in X$ such that $2\sp{<\omega}\setminus (C_{x_0}\cup\ldots\cup C_{x_k})\subset F$. Let $U\in\U_n$ be such that $\{x_0,\ldots,x_k\}\subset U$. It then follows that $2\sp{m(U)}\cap(C_{x_0}\cup\ldots\cup C_{x_k})\subset\{t(U,0),\ldots,t(U,k_U)\}$. Thus $x_U\subset 2\sp{<\omega}\setminus (C_{x_0}\cup\ldots\cup C_{x_k})\subset F$, which implies that $F\in \up{x_U}$.

Thus, since $\F_X$ is Menger, for each $n<\omega$, there is $\S_n\in[\U_n]\sp{<\omega}$ such that $\bigcup\{\up{x_U}:U\in\S_n,n<\omega\}$ is a cover of $\F_X$. Now it is easy to see that $\bigcup\{\S_n:n<\omega\}$ is an $\omega$-cover of $X$. This shows (b).

To prove (c), we will use the characterization in Lemma \ref{prodhurewicz}. So assume that $\F_X$ is Hurewicz. Using the same notation as in the proof of (b), let $\{\U_n:n<\omega\}$ be a sequence of $\omega$-covers of $X$. Then following the proof, since $\F_X$ is Hurewicz, for each $n<\omega$, there is $\S_n\in[\U_n]\sp{<\omega}$ such that $\{\bigcup\{\up{x_U}:U\in\S_n\}: n<\omega\}$ is a $\gamma$-cover of $\F_X$. Then it is easy to see that $\V=\bigcup\{\S_n:n<\omega\}$ is a groupable $\omega$-cover of $X$ with the grouping given precisely by $\{\S_n:n<\omega\}$.
\end{proof}

In \cite{bart-tsaban}, it was proven that there is a set of reals of size $\b$ with all their finite powers Hurewicz, hence 

\begin{coro}
(ZFC) There exists a Hurewicz filter of character $\b$ that is not $F_\sigma$.
\end{coro}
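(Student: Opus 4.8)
The plan is to feed the Bartoszynski--Tsaban set into Theorem~\ref{branches}. By \cite{bart-tsaban} fix a set $X\subset\csf$ with $|X|=\b$ all of whose finite powers are Hurewicz, and let $\F_X$ be the associated filter. Part~(c) of Theorem~\ref{branches} immediately gives that $\F_X$ is Hurewicz, so the only work left is to compute the character of $\F_X$ and to verify that it is not $F_\sigma$. It is harmless to assume in addition that $X$ is dense in $\csf$: adjoining a countable dense set $D$ leaves $|X|=\b$ unchanged, and each finite power $(X\cup D)\sp{n}$ is a finite union of spaces of the form (a power of $X$)$\times$(a power of $D$), each Hurewicz by Lemma~\ref{folklore} since powers of the countable set $D$ are $\sigma$-compact; thus every finite power of $X\cup D$ is again Hurewicz. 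With $X$ dense, $\I_X$ contains every finite subset of $2\sp{<\omega}$ and so $\F_X$ extends the cofinite filter.

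To see that $\F_X$ has character exactly $\b$, I would argue on the dual ideal $\I_X$. Since $\I_X$ is generated by the branches $C_x$ ($x\in X$), the finite unions of branches form a base, and there are only $|[X]\sp{<\omega}|=\b$ of them; hence the character is at most $\b$. For the reverse inequality, recall that distinct branches $C_x$ and $C_y$ are almost disjoint, meeting only along their common initial segment. Thus if $\mathcal{B}$ were a base of $\I_X$ of size $<\b$, the finitely many branches bounding each member of $\mathcal{B}$ would together involve fewer than $\b$ elements of $X$ (as $\b$ is uncountable), and so would omit some $x\in X$; but then $C_x$, being almost disjoint from each branch used, could not be almost contained in any member of $\mathcal{B}$, a contradiction. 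Hence the character of $\F_X$ is $\b$.

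It remains to show $\F_X$ is not $F_\sigma$, which by part~(a) of Theorem~\ref{branches} is equivalent to $X$ not being $F_\sigma$. When $\b<\cont$ this is automatic, since an uncountable $F_\sigma$ (indeed, Borel) subset of $\csf$ contains a perfect set and hence has size $\cont$, whereas $|X|=\b$. The case $\b=\cont$ is the step I expect to be the main obstacle: here cardinality gives nothing, and one must instead use that the witness can be chosen to be genuinely non-$\sigma$-compact. This is precisely the content of the counterexamples to Hurewicz's conjecture (a Hurewicz space need not be $\sigma$-compact), of which the Bartoszynski--Tsaban set is a strengthening; taking such a non-$\sigma$-compact $X$ and noting that an $F_\sigma$ subset of the compact space $\csf$ is $\sigma$-compact, we conclude $X$ is not $F_\sigma$. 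Theorem~\ref{branches}(a) then delivers that $\F_X$ is not $F_\sigma$, finishing the argument.
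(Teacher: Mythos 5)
Your skeleton is exactly the paper's (the paper states the corollary as an immediate consequence of Theorem~\ref{branches} together with the Bartoszy\'nski--Tsaban theorem, and supplies no further details), and most of the details you add are correct: the character computation via the almost disjointness of the branches $C_x$ is right, as is the case $\b<\cont$ of the non-$F_\sigma$ argument.

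There is, however, one genuine gap, and it sits at the step you yourself flag as the main obstacle. Your opening reduction ``it is harmless to assume $X$ is dense'' is justified only for cardinality and for the Hurewicz property of finite powers; it is \emph{not} harmless for non-$\sigma$-compactness, which is what you need when $\b=\cont$. Adjoining a countable dense set can destroy non-$\sigma$-compactness: $\csf$ minus a countable dense set is homeomorphic to $\baire$, hence not $\sigma$-compact, yet its union with that countable set is the compact space $\csf$. Since the set you actually feed into Theorem~\ref{branches} is $X\cup D$, citing that the \emph{original} Bartoszy\'nski--Tsaban witness is non-$\sigma$-compact does not close the case $\b=\cont$. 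The repair is easy, in either of two ways. First, the densification is unnecessary: the Bartoszy\'nski--Tsaban set has the form $H\cup[\omega]^{<\omega}$ with $H$ a $\b$-scale, and $[\omega]^{<\omega}$ (the eventually zero sequences) is already dense in $\csf$, so $\I_X$ already contains all finite subsets of $2^{<\omega}$ and $\F_X$ extends the Fr\'echet filter. Second, even for $X\cup D$ one can argue directly: if $X\cup D$ were $F_\sigma$, then $H$, which differs from the Borel set $(X\cup D)\cap[\omega]^{\omega}$ by a countable set, would be an uncountable Borel set well-ordered by $\leq^*$; this is impossible, since $<^*$ restricted to such a set is an analytic well-founded relation whose rank would be at least $\omega_1$, contradicting the Kunen--Martin theorem. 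This last argument is essentially \emph{why} the Bartoszy\'nski--Tsaban set fails to be $\sigma$-compact, so you cannot simultaneously outsource that fact to the citation and modify the set it applies to.
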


However, it is still open if there is a set of reals of size $\d$ with all their powers Menger (Problem 3.2 from \cite{tsaban-open_problems_2}). Thus, we have the following.

\begin{ques} Is there, assuming only ZFC, a Menger filter of character $\d$ that is not $F_\sigma$?
\end{ques}

Chaber and Pol have shown that if $\b=\d$, then there exists a non-Hurewicz set of reals with all its finite powers Menger. A proof of the Chaber and Pol result can be found in \cite{tsaban-menger_hurewicz_book}.

Using Theorem \ref{branches}, it follows that under $\b=\d$ there is a Menger filter that is not Hurewicz and has character $\d$. However, it turns out that the Chaber-Pol construction can be slightly modified to give a direct construction of a filter. For the sake of completeness, we will outline how to show this. 

\begin{propo}\label{bequalsd}
If $\b=\d$ there exists a Menger filter of character $\d$ that is not Hurewicz.
\end{propo}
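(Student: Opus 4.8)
The plan is to combine Theorem \ref{branches} with the Chaber--Pol set. Since $\b=\d$, fix a set $X\subseteq\csf$ all of whose finite powers are Menger but which is itself not Hurewicz; the Chaber--Pol construction produces such a set of size $\b=\d$ (and any non-Hurewicz subset of $\csf$ must have size at least $\b$ by Proposition \ref{classichurewicz}). Applying Theorem \ref{branches}(b), the fact that every finite power of $X$ is Menger gives that $\F_X$ is Menger, while Theorem \ref{branches}(c) applied to the non-Hurewicz power $X=X\sp{1}$ gives that $\F_X$ is not Hurewicz. Thus $\F_X$ is the desired Menger, non-Hurewicz filter, and it remains only to pin down its character.

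For the character, recall that $\I_X$ is generated by the finite unions of branches $\{C_x:x\in X\}$ together with the finite subsets of $2\sp{<\omega}$, so $\I_X$ (hence $\F_X$) has a base of cardinality at most $|[X]\sp{<\omega}|\cdot\aleph_0=|X|=\d$; therefore the character of $\F_X$ is at most $\d$. On the other hand, $\F_X$ is not Hurewicz, so by Proposition \ref{lessthanb} its character is at least $\b=\d$. Combining the two bounds, the character is exactly $\d$, as required.

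For completeness one can also give the self-contained construction promised above, by running the Chaber--Pol recursion directly at the level of a filter base rather than passing through $\F_X$. Here I would fix a $\leq\sp\ast$-increasing dominating family $\{f_\alpha:\alpha<\d\}$ (a scale, which exists precisely because $\b=\d$) and build, by transfinite recursion of length $\d$, a base $\{F_\alpha:\alpha<\d\}$ for a filter on $\omega\times\omega$ along the template of Proposition \ref{lessthanb}, taking $F_\alpha$ to lie above the graph of $f_\alpha$. The non-Hurewicz direction is then the easy half: the covers $\U_n=\{U(n,m):m<\omega\}$ of Proposition \ref{lessthanb} admit no $\gamma$-cover of the form $\{\bigcup\V_n:n<\omega\}$, because the family $\{f_\alpha:\alpha<\d\}$ is unbounded, exactly as in that proof. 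The Menger direction is arranged by the recursion: at stage $\alpha$ there are fewer than $\b$ previously-imposed requirements, and since fewer than $\b$ functions are $\leq\sp\ast$-bounded, one can choose $F_\alpha$ so that no continuous image of the resulting filter in $\baire$ is dominating, i.e. so that the filter is Menger by Proposition \ref{classichurewicz}(a).

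The main obstacle is the Menger verification, not the non-Hurewicz one. Showing that the filter is not Hurewicz is robust and essentially identical to Proposition \ref{lessthanb}; the delicate point is that the base encodes an unbounded---indeed dominating---family, which on its face threatens the Menger property, and one must check that finite selections always suffice to cover. This is exactly where $\b=\d$ is indispensable: it both supplies the scale that yields unboundedness and keeps the number of simultaneous constraints below $\b$ at every stage of the recursion, so that they remain $\leq\sp\ast$-bounded and can be diagonalized against. In the quick route this entire difficulty is absorbed into Theorem \ref{branches} together with the cited Chaber--Pol theorem; in the direct route it is the whole content of the bookkeeping.
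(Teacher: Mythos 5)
Your main argument (the reduction via Theorem \ref{branches}) is correct, but it is not the paper's proof of this proposition; it is, almost verbatim, the remark the paper makes in the paragraph \emph{preceding} it, where the authors note that Theorem \ref{branches} plus the Chaber--Pol theorem already yields a Menger, non-Hurewicz filter of character $\d$. The paper's actual proof is the ``direct construction'' it promises instead: working on $\omega$ itself, the authors use $\b=\d$ to recursively choose infinite coinfinite sets $x_\alpha\subset\omega$ ($\alpha<\b$) from comeager sets, so that for every $x\in[\omega]^{\omega}$ fewer than $\b$ indices $\alpha$ satisfy $x_\alpha\leq^\ast x$ or $\omega\setminus x_\alpha\leq^\ast x$; they then pick signs $t(\alpha)$ so that $B=\{x(\alpha,t(\alpha)):\alpha<\b\}$ generates a proper ideal. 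The Menger property of that ideal is the hard part and is quoted from the Bartoszy\'nski--Tsaban argument (Theorem 16 of \cite{bart-tsaban}) showing all finite powers of $B\cup[\omega]^{<\omega}$ are Menger, then lifted by Lemma \ref{lemmabase}; non-Hurewicz follows because the dual filter contains the unbounded family $\{x(\alpha,1-t(\alpha)):\alpha<\b\}$, via Proposition \ref{classichurewicz}. Your route buys brevity, since the difficulty is absorbed into Theorem \ref{branches} and the cited Chaber--Pol set; the paper's route buys a filter living on $\omega$ and a proof independent of Theorem \ref{branches}. Your character computation (a base of size $|[X]^{<\omega}|\cdot\aleph_0=\d$ above, Proposition \ref{lessthanb} below) is fine.

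Your ``for completeness'' sketch of a direct construction, however, contains a genuine error, and not merely in the bookkeeping. A filter on $\omega\times\omega$ generated by the sets $F_\alpha=\{\pair{n,m}:m\geq f_\alpha(n)\}$ for a \emph{dominating} family $\{f_\alpha:\alpha<\d\}$ can never be Menger: every member $F$ of such a filter contains a finite intersection of the $F_\alpha$'s and hence meets every column, so $\Phi(F)(n)=\min\{k:\pair{n,k}\in F\}$ defines a continuous map from the filter into $\baire$, and its image contains each $f_\alpha=\Phi(F_\alpha)$, hence is dominating; by Proposition \ref{classichurewicz}(a) the filter is not Menger. In fact the object you propose to build is exactly the paper's canonical example of a \emph{non}-Menger filter, namely the one behind Proposition \ref{lessthand}. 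The step meant to rescue the Menger property --- ``choose $F_\alpha$ so that no continuous image of the resulting filter in $\baire$ is dominating'' --- is not a step a recursion of length $\d$ can perform: there are $\cont$ many continuous maps into $\baire$, the filter is only determined once the recursion ends, and ``fewer than $\b$ requirements at stage $\alpha$'' has no bearing on them. This is precisely why the paper's construction abandons the template of Proposition \ref{lessthanb}, chooses generic subsets of $\omega$ (so that the resulting family is unbounded but not dominated in the relevant sense), and proves Menger by a substantive covering argument rather than by diagonalization; that verification is the content your sketch omits.
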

\begin{proof}
Recall that $[\omega]\sp\omega$ can be considered a subspace of $\baire$ by sending each set to its enumerating function. First, we need a collection $\{x_\alpha:\alpha<\b\}$ of infinite, coinfinite subsets of $\omega$ such that  
\begin{quote}
\noindent{$(\ast)$} for every $x\in[\omega]\sp\omega$, $|\{\alpha<\b: x_\alpha\leq\sp\ast x\textrm{ or }\omega\setminus x_\alpha\leq\sp\ast x\}|<\b$.
\end{quote}

We briefly describe how to obtain this family. Let $\{y_\alpha:\alpha<\d\}\subset[\omega]\sp{\omega}$ be a dominating family such that $\omega\setminus y_\alpha$ is infinite for all $\alpha<\d$. Assume that we have chosen $\{x_\alpha:\alpha<\beta\}$. Let $z\in[\omega]\sp{\omega}$ be a bound of $\{x_\alpha,y_\alpha,\omega\setminus y_\alpha:\alpha<\beta\}$. The set of all infinite, coinfinite $x\subset\omega$ such that $x\not\leq\sp\ast z$ and $\omega\setminus x\not\leq\sp\ast z$ is comeager in $[\omega]\sp{\omega}$, so choose $x_\beta$ with these properties. From this construction, property $(\ast)$ is easily seen to hold.

Define $x(\alpha,0)=x_\alpha$ and $x(\alpha,1)=\omega\setminus x_\alpha$ for all $\alpha<\b$. Then it is possible to choose $t\in{}\sp{\b}{2}$ such that $B=\{x(\alpha,t(\alpha)):\alpha<\b\}$ generates a proper ideal (that is, the union of finitely many elements of $B\cup[\omega]\sp{<\omega}$ does not cover $\omega$).

The next step is to prove that $B\cup[\omega]\sp{<\omega}$ has all its finite powers Menger. This is a non-trivial step but is exactly the same as the proof that the set constructed in Theorem 16 of \cite{bart-tsaban} has all its finite powers Menger.

Let $\I$ is the ideal generated by $B\cup[\omega]\sp{<\omega}$. Then by Lemma \ref{lemmabase} we have that $\I$ is Menger. To see that $\I$ is not Hurewicz, consider the dual filter $\I\sp\ast=\{\omega\setminus A:A\in\I\}$. Naturally, $B\sp\prime=\{x(\alpha,1-t(\alpha)):\alpha<\b\}\subset\I\sp\ast\subset[\omega]\sp\omega$. So we may think that $\I\sp\ast$ is a subset of $\baire$. Notice that implies that $B\sp\prime$ is unbounded. Thus, $\I\sp\ast$ is unbounded in $\baire$. By Proposition \ref{classichurewicz}, we find that $\I\sp\ast$ is not Hurewicz.
\end{proof}

\begin{coro}
There exists a Menger filter that is not Hurewicz in ZFC.
\end{coro}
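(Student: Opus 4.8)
The plan is to perform a case split on the relationship between $\b$ and $\d$. Since $\b\leq\d$ is a theorem of ZFC, exactly one of $\b=\d$ or $\b<\d$ holds, and I will produce a Menger non-Hurewicz filter in each case. The case $\b=\d$ is already settled: Proposition \ref{bequalsd} directly gives a Menger filter of character $\d$ that is not Hurewicz, so no further work is needed there.

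The interesting case is $\b<\d$, and the strategy here is to recycle an object already built. Proposition \ref{lessthanb} constructs a filter $\F$ of character $\b$ that is not Hurewicz. Under the hypothesis $\b<\d$, this $\F$ has character strictly below $\d$. I then invoke the easy direction of Proposition \ref{lessthand}: every filter of character $<\d$ is Menger, which follows from Lemma \ref{lemmabase} combined with the fact that every subset of $\baire$ of cardinality $<\d$ is Menger (by Proposition \ref{classichurewicz}). Therefore $\F$ is simultaneously Menger and not Hurewicz.

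Combining the two cases, a Menger non-Hurewicz filter exists regardless of which inequality holds, hence in ZFC. I do not expect any genuine obstacle: both ingredients are already proved in the paper, and the only point to verify is that the character bound coming from Proposition \ref{lessthanb} lines up exactly with the Menger threshold of Proposition \ref{lessthand}.
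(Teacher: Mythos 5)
Your proof is correct and is essentially identical to the paper's: the same case split on $\b<\d$ versus $\b=\d$, using the non-Hurewicz filter of character $\b$ from Proposition \ref{lessthanb} (Menger by Proposition \ref{lessthand}) in the first case and Proposition \ref{bequalsd} in the second. Your only addition is spelling out why filters of character $<\d$ are Menger, which the paper leaves to the cited proposition.
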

\begin{proof}
If $\b<\d$, use any non-Hurewicz filter of size $\b$, for example the one constructed on Proposition \ref{lessthanb}. By Proposition \ref{lessthand}, this filter is Menger. If $\b=\d$, use the filter from Proposition \ref{bequalsd}.
\end{proof}

In fact it is still an open problem to find a set of reals of size $\d$ that is not Hurewicz and with all its finite powers Menger (see Problem 5.7 in \cite{tsaban-menger_hurewicz_book}). Thus, Question \ref{mengernonhurewicz} remains unsolved.

Let us remark that it is also known that if $\d$ is regular, then there is a non-Hurewicz set of reals of size $\d$ with all its powers Menger \cite[Theorem 9.1]{tsaban-zdomskyy-scales_fields_hurewicz}.

\section{FUF filters are Hurewicz}

In this section we present a result of Chodounsk\'y that FUF filters are Hurewicz (we would like to thank him for allowing us to include this result). This class of filters was introduced by Reznichenko and Sipacheva motivated by  Malykhin's question whether separable Fr\'echet topological groups are metrizable (see \cite{Rezn-Sip}).

The set $[\omega]\sp{<\omega}$ is a (Boolean) group under the symmetric difference and it is possible to define a topological group using filters on $\csp$. Let $\F$ be a filter on $\csp$. The idea is to make $\{[F]\sp{<\omega}:F\in\F\}$ a base at $\emptyset$. A $\pi$-network with respect to $\F$ is a collection $\X\subset[\omega]\sp{<\omega}$ such that for every $F\in\F$ there is $x\in\X$ with $x\subset F$. If $\S\subset\X$, we will say that $\S$ converges (to $\emptyset$) if for every $F\in\F$ the set of $x\in X$ with $x\not\subset F$ is finite. A filter $\F$ is FUF (Frech\'et-Urysohn for finite sets) if every time $\X\subset[\omega]\sp{<\omega}$ is a $\pi$-network wrt $\F$ there exists $\S\subset\X$ that $\F$-converges.

In fact, our motivation for considering these filters comes from Nyikos's proof that if $X$ is $\Sone{\Omega,\Gamma}$ (usually called a $\gamma$-set), then $\F_X$ is FUF \cite{nyikos-cantortree_fu}. From Theorem \ref{branches} we know that this type of filters are Hurewicz. However, we have the following: 

\begin{propo} (David Chodounsk\'y)
Every FUF filter is Hurewicz.
\end{propo}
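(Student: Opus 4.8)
The plan is to reduce the Hurewicz property to a boundedness statement and then feed an appropriate $\pi$-network into the FUF hypothesis. By Proposition \ref{classichurewicz}(b) it suffices to show that every continuous $\psi\colon\F\to\baire$ has bounded image. Replacing $\psi(F)$ by $n\mapsto\max_{m\le n}\psi(F)(m)$ we may assume each $\psi(F)$ is nondecreasing; then the clopen sets $V_{n,k}=\{F\in\F:\psi(F)(n)\le k\}$ increase with $k$ to all of $\F$, and $\psi[\F]$ is bounded exactly when there is $g\in\baire$ for which $\{V_{n,g(n)}:n<\omega\}$ is a $\gamma$-cover of $\F$. Equivalently, working directly with $\Ufin{\Open,\Gamma}$, after refining each given open cover to a countable basic clopen cover and forming increasing finite unions, the task becomes: given clopen covers increasing at each level, select one set from each level so that the selection is a $\gamma$-cover.

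The bridge to FUF is the dictionary between finite sets and upsets. For $\X\subseteq[\omega]\sp{<\omega}$, the family $\{\up{x}:x\in\X\}$ covers $\F$ precisely when $\X$ is a $\pi$-network; moreover it is automatically an $\omega$-cover, since for $S\in[\F]\sp{<\omega}$ the set $\bigcap S$ lies in $\F$, and any $x\in\X$ with $x\subseteq\bigcap S$ gives $S\subseteq\up{x}$. Dually, an infinite $\S\subseteq[\omega]\sp{<\omega}$ $\F$-converges if and only if $\{\up{s}:s\in\S\}$ is a $\gamma$-cover of $\F$, since both statements assert that for every $F\in\F$ one has $s\subseteq F$, i.e.\ $F\in\up{s}$, for all but finitely many $s$. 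Thus the FUF hypothesis says exactly that every $\omega$-cover of $\F$ by sets of the form $\up{s}$ admits a $\gamma$-subcover of the same form.

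With this dictionary in hand I would extract from the covers $\{V_{n,k}\}$ a single $\pi$-network and decode an $\F$-convergent subfamily into the required selection. Concretely, using continuity and the zero-dimensionality of $\csp$, each $F$ lying in $V_{n,k}$ has a basic clopen neighbourhood $[a,b]\cap\F\subseteq V_{n,k}$ with $a\subseteq F$ finite; collecting the positive witnesses $a$ (tagged with their level $n$) produces a $\pi$-network, and an $\F$-convergent subfamily supplied by FUF should assign, for each $n$, a finite set $\V_n$ of cover pieces whose union eventually contains each $F$, so that $\{\bigcup\V_n:n<\omega\}$ is the desired $\gamma$-cover.

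The main obstacle is that FUF only senses the upward (membership) part of the topology, through the $\up{s}$'s, whereas a basic clopen set $[a,b]$ also carries the negative constraint $b\cap F=\emptyset$; replacing $[a,b]$ by $\up{a}$ enlarges it, and since a non-Fr\'echet filter contains a Cantor set and hence fails $\Sone{\Gamma,\Open}$ by Lemma \ref{folklore}, one cannot simply refine arbitrary open covers to upset covers. The real work therefore lies in showing that the negative parts can be absorbed into the $\gamma$-tail: for a fixed $F\in\F$ the admissible negative parts are the finite subsets of $\omega\setminus F\in\I$, only finitely many of which interfere at each stage, so the finite-union latitude of $\Ufin{\Open,\Gamma}$ ought to let the selection dictated by the convergent subfamily meet the negative constraints for all but finitely many $n$. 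I expect this absorption step—most cleanly run in the contrapositive, by turning an unbounded continuous image into a $\pi$-network that admits no $\F$-convergent subfamily—to be the crux; the preceding reductions and the $\pi$-network dictionary are routine.
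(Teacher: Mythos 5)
Your setup is on the right track---the dictionary between $\pi$-networks and covers by sets $\up{x}$, and between $\F$-convergence and $\gamma$-covers, is exactly the one the paper uses---but the step you explicitly defer is the heart of the proof, and your proposed mechanism for it would not work. You plan to collect positive witnesses $a$ with $[a,b]\cap\F\subseteq V_{n,k}$ and then ``absorb the negative parts into the $\gamma$-tail'' after FUF has been applied; but the convergent subfamily only tells you that eventually $F\in\up{a}$, which says nothing about $F$ eventually lying in the cover sets $V_{n,k}$ (since $\up{a}\not\subseteq V_{n,k}$ in general), and no tail argument recovers this. The paper eliminates the negative parts entirely, \emph{before} FUF enters, by a compactness argument: for $F\in\F$ the set $\up{F}$ is compact and equals the decreasing intersection $\bigcap\{\up{(F\cap k)}:k<\omega\}$ of compact sets; finitely many members of $\U_n$ cover $\up{F}$, hence already some truncation $\up{(F\cap m)}$ is contained in a finite union of members of $\U_n$. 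This yields a countable cover $\V_n$ of $\F$ by upsets of finite sets, each contained in a \emph{finite union} of members of $\U_n$---which is exactly the latitude $\Ufin{\Open,\Gamma}$ permits, and the reason the argument lands on Hurewicz; refinement by single cover elements is impossible, as you correctly sensed (every $\up{a}$ contains the point $\omega$).

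There is a second gap even granting that reduction: tagging each witness with its level $n$, forming one $\pi$-network, and applying FUF once does not produce the required selection, because the $\F$-convergent subfamily FUF returns could consist entirely of (infinitely many distinct) witnesses tagged with level $0$, giving no selection at the remaining levels and hence no $\gamma$-cover of the form $\{\bigcup\W_n:n<\omega\}$. The paper's fix is a bundling construction: the members of its combined $\pi$-network are the sets $Y_s=\bigcup\{x(i,s(i)):i<|s|\}$, each packaging one witness from \emph{every} level $i<|s|$ (with the normalization $|s|=s(0)$); it then shows that in a convergent subfamily the values $s_i(0)$ cannot repeat infinitely often, thins to a strictly increasing subsequence, and reads off a diagonal selector $t$ so that $\{\up{x(n,t(n))}:n<\omega\}$ is a $\gamma$-cover that refines finite unions of $\U_n$ level by level. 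Both this diagonalization and the compactness refinement are missing from your proposal, so what you have is a correct frame around the two genuinely hard steps rather than a proof.
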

\begin{proof}
Let $\{\U_n:n<\omega\}$ be a sequence of open covers of a FUF filter $\F$. For each $n<\omega$, let us define a cover $\V_n$ derived from $\U_n$. 

Given $F\in\F$ and $n<\omega$, since $\up{F}=\bigcap\{\up{(F\cap k)}\ :k<\omega\}$, there is $m<\omega$ such that $\up{(F\cap m)}$ is covered by finitely many elements of $\U_n$. So there exists a countable subfamily of $\{\up{x}:x\in[\omega]\sp{<\omega}\}$ that covers $\F$ and refines finite unions of $\U_n$; call this family $\V_n$.

Let $\X_n=\{x:\up{x}\in\V_n\}$ for all $n<\omega$. The fact that $\V_n$ is a cover of $\F$ immediately translates to the statement that $\X_n$ is a $\pi$-network wrt $\F$.

Let $\X_n=\{x(n,i):i<\omega\}$ be an enumeration for all $n<\omega$. Let $S=\{s\in[\omega]\sp{<\omega}:|s|=s(0)\}$ and for each $s\in S$ let $Y_s=\bigcup\{x(i,s(i)):i<|s|\}$. Define $\Y=\{Y_s:s\in S\}$. It is not hard to prove that $\Y$ is a $\pi$-network wrt $\F$. So there exists $\S\subset\Y$ that $\F$-converges. Enumerate $\S=\{Y_{s(i)}:i<\omega\}$.

Assume that there is $m<\omega$ such that $s_i(0)=m$ for infinitely many $i<\omega$. This implies that $x(0,m)\in F$ for all $F\in\F$, that is impossible.

Thus, for each $n<\omega$, the set of all those $i<\omega$ such that $s_i(0)=n$ is finite. Then we may choose $A\in[\omega]\sp\omega$ such that $s_i(0)<s_j(0)$ whenever $i,j\in A$ and $i<j$. Since clearly every subsequence of an $\F$-convergent sequence is $\F$-convergent, let us assume that $A=\omega$.

Define $t\in\omega\sp\omega$ such that given $i<\omega$, if $s_{j-1}<i\leq s_j$ for some $j<\omega$ (and $s_{-1}=-1$) then $t(i)=s_j(i)$. Then it is not hard to see that $\{x(n,t(n)):n<\omega\}$ is $\F$-convergent. This immediately translates the statement that $\X=\{\up{x(n,t(n))}:n<\omega\}$ is a $\gamma$-cover of $\F$. Finally, for each $n<\omega$, let $\W_n\in[\U_n]\sp{<\omega}$ be such that $\up{x(n,t(n))}\subset\bigcup\W_n$ (this follows from the definition of $\V_n$). Then $\{\bigcup\W_n:n<\omega\}$ is a $\gamma$-cover of $\F$, which is sufficient to finish the proof.
\end{proof}

\end{document}